\newcommand{\RR}{\mathbb{R}} %Real R
\newcommand{\CC}{\mathbb{C}} %Complex C
\newcommand{\Aut}{\mathrm{Aut}}
\newcommand{\inp}{\,\raisebox{0.4ex}{$\lrcorner$}\;}
\newcommand{\UG}{\mathrm{U}}
\newcommand{\mh}{\mathcal H}
\newcommand{\mrt}{P}
\newcommand{\W}{\mathcal W}
\newcommand{\V}{\mathcal V}
\newcommand{\cU}{\mathcal U}
\newcommand{\vp}{\varphi}
\newcommand{\C}{\mathbb{C}}
\newcommand{\paren}[1]{\left(#1\right)}
\newcommand{\abs}[1]{\left\lvert#1\right\rvert}
\newcommand{\set}[1]{\left\{#1\right\}}
\newcommand{\pd}[2]{\frac{\partial#1}{\partial#2}}
\newcommand{\pdl}[2]{\partial#1/\partial#2}
\newcommand{\ind}[4]{{#1}^{\phantom{#2}#3}_{#2\phantom{#3}#4}}
\newcommand{\wt}{\widetilde}
\newcommand{\w}{\wedge}
\newcommand{\beq}{\begin{eqnarray*}}
\newcommand{\eeq}{\end{eqnarray*}}
\newcommand{\beg}{\begin{equation}}
\newcommand{\eeg}{\end{equation}}
\newcommand{\di}{\partial}
\newcommand{\la}{\lambda}
\newcommand{\al}{\alpha}
\newcommand{\be}{\beta}
\newcommand{\ga}{\gamma}
\newtheorem{thm}{Theorem}[section]
\newtheorem{lem}[thm]{Lemma}
\newtheorem{prop}[thm]{Proposition}
\theoremstyle{definition}
\theoremstyle{remark}
\newtheorem{remark}[thm]{Remark}
\numberwithin{equation}{section}
\title{Almost CR manifolds with contracting CR automorphism}
\author{Jae-Cheon Joo}
\address{Department of Mathematics, King Fahd University of Petroleum and Minerals, 31261 Dhahran, Saudi Arabia}
\email{jcjoo@kfupm.edu.sa}
\author{Kang-Hyurk Lee}
\address{Department of Mathematics and Research Institute of Natural Science, Gyeongsang National University, Jinju, Gyeongnam, 52828, Republic of Korea}
\email{nyawoo@gnu.ac.kr}
\thanks{Research of the second named author was supported by  the National Research Foundation of Korea (NRF) grant funded by the Korea government (No. NRF-2019R1F1A1060891).}
\begin{document}

\begin{abstract} 
In this paper, we deal with a strongly pseudoconvex almost CR manifold with a CR contraction. We will prove that the stable manifold of the CR contaction is CR equivalent to the Heisenberg group model.
\end{abstract}

\maketitle

\section{Introduction}

The aim of this paper is characterizing strongly pseudoconvex almost CR manifolds admitting a CR contraction, equivalently  pseudo-Hermitian manifolds with a contracting pseudoconformal automorphism.

In  geometry, one of main questions is how large  transformation group of manifolds are and how to chacterize manifolds with large transformation group. In conformal and CR geometry, there is an interesting history about this question.  Obata considered conformal structures of Riemannian geometry and he succeeded in  characterizing the sphere $S^n$ under the noncompact action of conformal transformation group on compact Riemannian manifolds (\cite{Obata1970, Obata1971}), and those results were followed by Webster in CR case (\cite{Webster1977}). In 1972, Alekseevskii claimed that a Riemannian manifold either compact or noncompact with a nonproper action of conformal transformation is conformally equivalent to either $S^n$ or $\RR^n$ (\cite{Alekseevskii1972}), however it turned out later that his proof contains an error. Alekseevskii's claim has been reproved by Schoen in both conformal and CR cases in 1995 (\cite{Schoen1995}; cf. \cite{Ferrand1996,Spiro1997}). His theorem is as follows.

\begin{thm}[Schoen~\cite{Schoen1995}]
\

\begin{enumerate}
\item If the conformal automorphism group of a Riemannian manifold $M$ acts nonproperly, then $M$ is conformally equivalent to either the unit sphere or the Euclidean space.
\item If a strongly pseudoconvex CR manifold $M$ whose CR automorphism group acts nonproperly on $M$, then $M$ is CR equivalent to either the Heisenberg group or the unit sphere in the complex Euclidean space
\end{enumerate}
\end{thm}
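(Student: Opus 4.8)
The plan is to derive both statements from a single dynamical mechanism: nonproperness of the automorphism group produces a \emph{contraction}, that is, an automorphism $f$ with a fixed point $p_0$ whose iterates $f^n$ converge locally uniformly to the constant map $p_0$ on a neighborhood of $p_0$. First I would make nonproperness explicit: there are automorphisms $f_j$ and points $x_j \to x$ with $f_j(x_j) \to y$, while $\{f_j\}$ eventually leaves every compact subset of the automorphism group. Since this group is a Lie group acting smoothly (Montgomery--Zippin in the conformal case, Tanaka--Chern--Moser theory in the strongly pseudoconvex CR case) with compact isotropy at generic points, the only way to violate properness is for the $f_j$ to concentrate the geometry near a point; after renormalizing by isotropy elements and passing to a subsequence one extracts an element behaving like a dilation, hence a contraction fixing some $p_0$.

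The main dichotomy is then according to whether $M$ is compact. In the noncompact case the basin of attraction (stable manifold) $\Omega$ of $f$ is an open, $f$-invariant subset diffeomorphic to Euclidean space, and the crux is to identify $\Omega$ with the flat model. In the compact case I would instead show that the contraction forces $M$ to be the one-point compactification of a single flat orbit, which is the sphere.

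The geometric core is a curvature-vanishing argument. The conformal Weyl/Cotton tensor and the CR Chern--Moser curvature are natural tensors, invariant under the automorphisms and carrying strictly positive homogeneity weight with respect to the Heisenberg (respectively Euclidean) dilations that model the contraction near $p_0$. Pulling this curvature back by $f^n$ and letting $n \to \infty$, the positive weight forces it to vanish identically on $\Omega$. Strong pseudoconvexity (respectively the Riemannian structure) then makes $\Omega$ locally CR-flat (respectively conformally flat), so a developing map into the model $\mathbb{H}$ or the sphere is defined; the contraction dynamics, which pushes every point of $\Omega$ into an arbitrarily small neighborhood of $p_0$ on which the developing map is a diffeomorphism, upgrades this to a global CR (respectively conformal) equivalence of $\Omega$ with the Heisenberg group (respectively $\RR^n$), and of $M$ with the sphere in the compact case.

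The main obstacle is the passage from local flatness to global equivalence: one must prove the developing map globally injective and surjective onto the model, which is precisely where the contraction is indispensable, since without the dynamics local flatness yields only a locally homogeneous structure. The other delicate point is making the extraction of a genuine contraction from bare nonproperness rigorous, controlling the isotropy renormalization so that the limiting map actually contracts toward $p_0$ rather than merely failing to be proper.
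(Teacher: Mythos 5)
This statement is quoted from Schoen's paper and is not proved in the present paper at all, so there is no internal proof to compare against; your proposal must therefore be judged on its own merits, and it contains a genuine gap at its very first step. Nonproperness of the automorphism group does \emph{not} yield a contracting automorphism. On the standard sphere, a parabolic conformal (or CR) transformation generates a nonproper action, yet it has a single fixed point at which the differential has no eigenvalue of modulus less than one -- in Heisenberg coordinates it is a translation, with no contraction anywhere. More generally, nonproperness only gives a \emph{sequence} $f_j$ escaping every compact subset of the group, and the limiting dynamics can be parabolic; no amount of isotropy renormalization manufactures a genuine fixed contracting element from such a sequence. Since the sphere and Euclidean-space conclusions must in particular cover these parabolic situations, your ``single dynamical mechanism'' cannot carry the whole theorem. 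This is exactly why Schoen's actual proof is analytic rather than dynamical: it runs through the conformal/CR Yamabe equation and PDE derivative estimates for the conformal factor, a point the present paper makes explicitly when it says the difficulty of extending Theorem~\ref{t;jl} ``comes from the construction of the Yamabe equation which plays a crucial role in the proof of Theorem~\ref{t;jl} as well as Schoen's theorem,'' and again in the remark following the proof of Theorem~\ref{t;main}.

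The second half of your sketch -- canonical normalization at the contracting fixed point, curvature and torsion killed by their positive weight under pullback by $\vp^k$, local equivalence to the flat model, and globalization of the equivalence by the contraction dynamics -- is essentially sound, but it is the proof of this paper's Theorem~\ref{t;main} (via Proposition~\ref{p;equiv} and the maps $F_k=\Lambda^{-k}\circ F\circ\vp^k$), whose hypothesis \emph{assumes} the contraction you failed to extract. Even granting a contraction, your compact-case claim that $M$ is the one-point compactification of the stable manifold is unjustified: the paper needs the extra hypothesis of Theorem~\ref{t;thm2} (a contracting fixed point of $\vp^{-1}$ on $\di\W$, plus integrability forced through Lemma~\ref{lemma:CR mapping} and then Schoen's theorem itself) to conclude $M=\W\cup\W'$ is the sphere, and the remark notes the general ambient characterization remains open. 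So your proposal proves a strictly weaker statement than the one at hand.
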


%From Obata's study on the identity component of the conformal transformation group in \cite{Obata1970, Obata1971}, the unit sphere $S^n$ and the Euclidean space $\RR^n$ have been characterized by their conformal automorphism groups by Ferrand~\cite{Ferrand1971,Ferrand1996} and Alekseevskii~\cite{Alekseevskii1972,Alekseevskii1973}. The most general work in this study was achieved by Schoen~\cite{Schoen1995}: \emph{if the conformal automorphism group of a Riemannian manifold $M$ acts nonproperly, then $M$ is conformally equivalent to the unit sphere or the Euclidean space.} In the same paper, Schoen gave an analogous results for the CR (i.e. pseudoconformal) automorphism group of a strictly pseudoconvex  CR (i.e. pseudo-Hermitian) manifold also: \emph{if a strongly pseudoconvex CR manifold $M$ whose CR automorphism group acts nonproperly on $M$, then $M$ is CR equivalent to the Heisenberg group or the unit sphere in the complex Euclidean space} (see also \cite{Webster1977, Spiro1997}).

On the other hand, in complex geometry it has been also studied the characterization of domains in $\CC^n$ by noncompact automorphism group action. Among many interesting results, one of the most famous works was proved by Wong-Rosay (\cite{Wong1977,Rosay1979}):  \emph{a strongly pseudoconvex bounded domain $\Omega$ in $\CC^n$ with a noncompact automorphism group is biholomorphic to the unit ball $\mathbb{B}^n$ in $\CC^n$}. Note that a holomorphic automorphism group $\Aut(\Omega)$ of $\Omega$ extends to a CR transformation group on the boundary $\partial\Omega$ and the noncompactness of $\Aut(\Omega)$ implies that $\Aut(\Omega)$ acts nonproperly on $\partial\Omega$. Since the the unit sphere $S^{2n-1}$ in $\CC^n$ is the boundary of the unit ball and the Heisenberg group is the unit sphere without one point, $S^{2n-1}\setminus\{p\}$, the Wong-Rosay theorem can be regarded as the counterpart of Schoen's result on the CR manifold. 

The  Wong-Rosay theorem was generalized by Gaussier-Sukhov~\cite{GS2006} and the second author~\cite{LKH2006,LKH2008} in almost complex setting. Without the integrability of the almost complex structure, the unit ball with the standard complex structure is not a unique model. There are many strongly pseudconvex domains, called \emph{model strongly pseudconvex domains} whose automorphism groups are noncompact and almost complex structures are non-integrable.  In \cite{LKH2008}, there are a full classification of such domains and a description of their automorphism groups. 

As a CR counterpart, it is also natural to ask if we can generalize Schoen's result in almost CR case. In the previous paper \cite{JooLee2015}, we have proved that the boundary of each model strongly pseudoconvex domain  is a strongly pseudoconvex almost CR manifold which has a group structure that is isomorphic to that of standard Heisenberg group as groups. We call them generalized Heisenberg groups.  It also turned out that the CR automorphism group of a generalized Heisenberg group acts nonproperly since it always admit a contracting CR automorphism as the standard one. A generalized Heisenberg group is known to be  parametrized by a skew symmetric complex $n\times n$ matrix $P$ and we  denote it by $\mh_P$ (see Section~\ref{subsec:models} for defintion).

In \cite{JooLee2015}, we proved the following theorem which is a generalization of Schoen's theorem for low dimensional almost CR manifolds.

%studied two related geometric structure, the contact sub-Riemannian structure and the pseudo-Hermitian structure. We first dealt with the contact sub-Riemann structure equation, the pseudo-conformal change and the sub-conformal Yamabe problem for the twisted scalar curvature. For the the pseudo-Hermitian structure which comes from the CR structure, we also derived the pseudo-Hermitian structure equation and its equivalence problem. As a result, we have

%\begin{thm}[Theorem 5.1 in \cite{JooLee2015}]
%Let $(M, H, J)$ be a strongly pseudoconvex almost CR manifold which admits an associated orthogonal contact sub-Riemannian structure $(\theta, g)$. Suppose that the CR automorphism group of $(M,J)$ acts on $M$ nonproperly.
%\begin{enumerate}
%\item If $M$ is compact, then $(M,J)$ is CR equivalent the unit sphere $S^{2n+1}$ in $\CC^{n+1}$ with the standard CR structure.
%\item If $M$ is noncompact, then $(M,J)$ is CR equivalent to  a Heisenberg group model $\mh_P$ for a skew-symmetric matrix $P$.
%\end{enumerate}
%\end{thm}

%Since a $3$-dimensional CR structure is always integrable, this is the same as Schoen's theorem. The pseudo-Hermitian structure of dimension $5$ or $7$ always admits an associated orthogonal contact sub-Riemannian structure. Therefore we can characterize the model $\mh_P$ by the nonproperness of the CR automorphism group action as a generalization of Schoen's theorem:

\begin{thm}[Theorem 1.2 in \cite{JooLee2015}]\label{t;jl}
Let $M$ be a manifold with strongly pseudoconvex almost CR structure of real dimension $5$ or $7$. Suppose that the CR automorphism group acts nonproperly on $M$. If $M$ is noncompact, then $M$ is CR equivalent to $\mh_P$ for a skew symmetric complex matrix $P$. If $M$ is compact, then the almost CR structure of $M$ is integrable and $M$ is CR equivalent to the unit sphere in the complex Euclidean space.
\end{thm}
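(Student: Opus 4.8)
The plan is to convert the hypothesis of a nonproper automorphism action into the existence of a single CR contraction, and then to analyze the structure near its fixed point by a scaling argument of Pinchuk type adapted to the almost CR category. First I would show that nonproperness of $\Aut_{CR}(M)$ forces such a contraction. Nonproperness yields a sequence $\{f_j\} \subset \Aut_{CR}(M)$ and points $p_j \to p$, $f_j(p_j) \to q$, for which $\{f_j\}$ has no convergent subsequence. Since automorphisms of a strongly pseudoconvex almost CR manifold are rigid --- determined by a finite jet at a point, via the Tanaka-type prolongation of the pseudo-Hermitian structure --- the only way $\{f_j\}$ can diverge is of contracting type: after composing with fixed automorphisms and passing to a subsequence, $\{f_j\}$ converges locally uniformly on $M \setminus \{q\}$ to the constant map $p$. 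From this I would extract a single CR automorphism $f$ (or a one-parameter family) that contracts toward $p$, so that $p$ is an attracting fixed point and $f^k \to p$ on a neighborhood.

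Next I would study $f$ near $p$. Fixing a pseudo-Hermitian contact form $\theta$ with its Tanaka--Webster connection, I set up pseudo-Hermitian normal coordinates $(z, t)$ adapted to the Heisenberg filtration with $p = 0$. Strong pseudoconvexity normalizes the Levi form at $0$, and the linearization $df$ at $p$ must preserve the contact distribution, the complex structure on it, and the Levi form up to scale; hence, up to a unitary rotation, $df$ is a Heisenberg parabolic dilation $\delta_\lambda : (z, t) \mapsto (\lambda z, \lambda^2 t)$ with $0 < \lambda < 1$. I would then rescale the almost CR structure by $\delta_{\lambda^{-k}}$ and exploit its invariance under $f^k$: to leading order $f^k$ agrees with $\delta_{\lambda^k}$, so the structure is approximately invariant under the dilation, with error tending to zero as one zooms in toward $p$. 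In the limit the rescaled structures converge to an almost CR structure invariant under the dilation semigroup generated by $f$, hence, by homogeneity, to one of the explicit models.

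The heart of the argument --- and the step I expect to be the main obstacle --- is controlling the nonintegrability tensor and the curvature of the almost CR structure under this scaling and identifying the limit model. The nonintegrability (torsion) tensor of an almost CR structure carries a definite weight under the Heisenberg dilation, so the self-similarity forced by $f$ pins down exactly which tensorial terms can survive in the limit; crucially, nonzero torsion is allowed and produces the non-integrable homogeneous models. In real dimension $5$ and $7$ (CR dimension $2$ and $3$) I would invoke the explicit classification of homogeneous strongly pseudoconvex almost CR structures admitting a dilation --- the generalized Heisenberg groups $\mh_P$ of \cite{LKH2008} --- to conclude that the scaling limit is CR equivalent to some $\mh_P$. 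The low-dimensional hypothesis enters precisely here, because this classification, together with the accompanying analysis of the surviving torsion terms and the convergence of the rescaled non-integrable structures, is only tractable (or only complete) in these dimensions.

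Finally I would globalize and split into the two cases. In the noncompact case the stable manifold of $f$, namely its basin of attraction, is open, is exhausted by the dilation orbits, and is therefore CR equivalent to the model $\mh_P$; since $M$ is connected and noncompact, the stable manifold is all of $M$, giving $M \cong \mh_P$. In the compact case the contraction $f$ must possess a complementary repelling fixed point $q$ (the attracting point of $f^{-1}$), so that the stable manifold of $p$ is $M \setminus \{q\}$ and $M$ is a one-point compactification of $\mh_P$; I would then use the structural fact that $\mh_P$ admits a smooth compactification to a compact almost CR manifold only when $P = 0$, which forces the torsion to vanish and the structure to be integrable. Then $M$ is a compact strongly pseudoconvex \emph{CR} manifold whose automorphism group acts nonproperly, and part (2) of Schoen's theorem identifies it as the unit sphere in $\CC^n$. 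The remaining difficulty throughout is the almost CR scaling analysis: establishing convergence of the rescaled non-integrable structures and ruling out spurious surviving torsion in dimensions $5$ and $7$.
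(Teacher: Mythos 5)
The paper does not actually reprove Theorem~\ref{t;jl}: it is quoted from \cite{JooLee2015}, where the proof follows Schoen's PDE method --- one constructs a curvature invariant satisfying a subconformal Yamabe-type equation under pseudoconformal change, and the derivative estimates for that equation (Propositions~2.1 and 2.1$'$ of \cite{Schoen1995}) carry both the passage from nonproperness to the model and the global identification of $M$. Your dynamical replacement breaks at exactly the two points where the PDE is needed. First, your opening reduction ``nonproperness forces a contraction'' is a genuine gap: finite-jet determination of CR automorphisms only tells you that a divergent sequence $\{f_j\}$ cannot subconverge; it gives no fixed point and no north--south dynamics, and the $f_j$ need not fix any point at all. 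Extracting a single contracting automorphism from a nonproper action is not known in this setting --- indeed the raison d'\^etre of the present paper's Theorem~\ref{t;main} is that the contraction must be \emph{assumed}, precisely because it is not known to follow from nonproperness. Second, your globalization in the noncompact case (``since $M$ is connected and noncompact, the stable manifold is all of $M$'') is a non sequitur: $\W$ is open, but its complement can be a nonempty closed invariant set, and ruling this out is exactly the global step that in \cite{JooLee2015} comes from the Yamabe derivative estimates; the remark following the proof of Theorem~\ref{t;main} states that this global characterization is open in arbitrary dimension for lack of such an equation. Likewise, in the compact case the existence of a complementary repelling fixed point is an extra \emph{hypothesis} in Theorem~\ref{t;thm2}, not an automatic consequence of compactness, and your ``$\mh_P$ compactifies smoothly only when $P=0$'' is asserted rather than proved (the paper gets integrability instead from Lemma~\ref{lemma:CR mapping} applied to the transition map between the two stable manifolds).

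You also misplace the low-dimensional hypothesis. The classification of the generalized Heisenberg groups $\mh_P$ and the scaling/self-similarity analysis you describe work in \emph{every} dimension: this is in substance what Sections~3--4 of the present paper do (canonical contact form with $\vp^*\theta=\mu\theta$, decay of all torsion and curvature coefficients under the weight $\mu^{k/2}$, then Proposition~\ref{p;equiv}), so nothing there is ``only tractable in dimensions $5$ and $7$.'' The restriction to real dimension $5$ or $7$ in \cite{JooLee2015} enters elsewhere: a scalar-curvature-type invariant obeying a Yamabe-type transformation law under pseudoconformal change is only constructed in CR dimensions $2$ and $3$, and it is this invariant, not the model classification, that powers the argument. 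So the core of your scaling step is sound (it is essentially Theorem~\ref{t;main} of this paper), but the reduction from nonproperness to a contraction and the passage from $\W$ to $M$ are missing ideas that your proposal does not supply.
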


The difficulty of the proof of this theorem in arbitrary dimension comes from the construction of the Yamabe equation which plays a crucial role in the proof of Theorem~\ref{t;jl} as well as Schoen's theorem \cite{Schoen1995}.

 In order to get a characterization of $\mh_P$ in arbitrary dimension by its automorphism action, we restrict our attention to the case of admitting a contracting automorphism. Recall that the generalized Heisenberg groups admit contracting CR automorphisms. The main result in this paper is as follows.

\begin{thm}\label{t;main}
Let $M$ be a strongly pseudoconvex almost CR manifold. Suppose $M$ admits a contracting CR automorphism. Then the stable manifold $\mathcal{W}$ of the contracting automorphism is CR equivalent to $\mh_P$ for some complex skew-symmetric matrix $P$. \end{thm}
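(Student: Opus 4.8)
The plan is to reduce the global statement to a local normal-form problem at the fixed point of the contraction and then to propagate the resulting model structure over all of $\W$ using the dynamics. Write $f$ for the contracting CR automorphism and $p$ for its (necessarily unique) attracting fixed point in $\W$; by definition $f^k\to p$ on $\W$, so $\W=\bigcup_{k\ge 0}f^{-k}(\Om)$ for any neighborhood $\Om$ of $p$ on which the local analysis is carried out. Thus it suffices to (i) produce CR-adapted coordinates near $p$ in which $f$ becomes the standard parabolic dilation of Heisenberg space and the almost CR structure $J$ becomes dilation-invariant, (ii) identify every dilation-invariant strongly pseudoconvex almost CR structure with some $\mh_\mrt$, and (iii) transport this identification along backward iterates to a global CR-equivalence $\W\cong\mh_\mrt$.

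First I would analyze the linear part $L=df_p$. Since $f$ preserves the CR distribution $H$ and the endomorphism $J$ on it, $L$ preserves $H_p$, commutes with $J_p$ there, and descends to a scaling on the transverse line $T_pM/H_p$. Strong pseudoconvexity is the rigidity input: the Levi form is a nondegenerate $J_p$-Hermitian pairing $H_p\times H_p\to T_pM/H_p$, and CR-invariance forces $L$ to scale it, say by the positive transverse factor $c=\la^2$. Then $U:=L|_{H_p}/\la$ is automatically a Levi-isometry commuting with $J_p$, hence unitary for the definite Levi form, while the contraction hypothesis gives $0<\la<1$. This pins down the parabolic weighting: horizontal eigenvalues all of common modulus $\la$ and transverse eigenvalue $\la^2$, so $L$ is conjugate to $\la\,U\oplus\la^2$, the linear model of a Heisenberg dilation.

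The heart of the argument is a CR-adapted linearization. Because the weights are $1$ on $H_p$ and $2$ on the transverse direction, the only resonant monomials are the quadratic ones producing the Heisenberg group law, so a Sternberg/Poincar\'e--Dulac type argument—performed in the smooth almost CR category and respecting the grading—should conjugate $f$ to the exact parabolic dilation $\de_\la(z,t)=(\la U z,\la^2 t)$ on $\HP^n=\CC^n\times\RR$. In these coordinates $f=\de_\la$, so the identity $f_*J=J$ becomes $(\de_\la)_*J=J$, i.e. $J$ is $\de_\la$-invariant; expanding $J$ in the homogeneous grading, only the weight-zero part survives the dilation, which is precisely a constant $J_p$-compatible deformation tensor, that is, a fixed skew-symmetric $\mrt$. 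Matching this with the defining data of the generalized Heisenberg groups (the model section, cf. \cite{JooLee2015,LKH2008}) identifies the local model with $\mh_\mrt$.

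Finally I would globalize. The local equivalence $\Phi\colon\Om\to\mh_\mrt$ intertwines $f$ with the contracting dilation $\de_\la$ of $\mh_\mrt$, i.e. $\Phi\circ f=\de_\la\circ\Phi$; since $\de_\la$ is itself a contraction of $\mh_\mrt$ whose stable manifold is all of $\mh_\mrt$, the relation $\Phi=\de_\la^{-k}\circ\Phi\circ f^{k}$ extends $\Phi$ unambiguously from $\Om$ to $\W=\bigcup_k f^{-k}(\Om)$, yielding a global CR-equivalence. I expect the main obstacle to be step~(i): carrying out the linearization \emph{inside} the non-integrable almost CR category so that the conjugation is CR and simultaneously normalizes $J$—this is exactly the difficulty that the integrable theory handles via Chern--Moser normal forms and that the Yamabe approach of \cite{JooLee2015} was designed to circumvent. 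Controlling the Nijenhuis-type terms through the rescaling, and ensuring the scaling limit of $J$ is genuinely attained rather than merely a weak limit, is where the technical weight of the proof will lie.
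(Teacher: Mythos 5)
Your plan stands or falls on step (i), and as stated that step fails. With the spectrum forced by strong pseudoconvexity --- horizontal eigenvalues $\la_1,\dots,\la_n$ all of modulus $\la$ and transverse eigenvalue exactly $\la^2$ --- resonances are unavoidable: $\la^2=\la_i\bar\la_i$ holds identically for every $i$ (and $\la^2=\la_i\la_j$ or $\la_i\bar\la_j$ whenever the phases of $U$ match). So Sternberg/Poincar\'e--Dulac in the smooth category gives at best a polynomial normal form $(z,t)\mapsto(\la Uz,\,\la^2 t+Q(z,\bar z))$ with a resonant quadratic $Q$ in the transverse component; the homological equation $c(\la Uz)-\la^2c(z)=-Q(z)$ is unsolvable precisely on the resonant part, so \emph{no} smooth conjugation removes $Q$, and a general smooth contraction with these weights is simply not conjugate to the exact dilation $\de_\la$. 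The term $Q$ does vanish in our situation, but only because of the CR structure --- a posteriori, via the theorem itself together with Theorem~\ref{thm:CR automorphism group}, which says the isotropy of $\mh_P$ consists of linear maps. A normal-form argument ``performed in the almost CR category'' that kills $Q$ while controlling $J$ is therefore not a routine adaptation of Sternberg; it is the entire content of the problem, and you supply no mechanism for it, which makes the reduction circular. Two secondary soft spots: your claim that ``only the weight-zero part of $J$ survives the dilation'' needs the continuity-at-the-fixed-point argument for an exactly $\de_\la$-invariant tensor (a merely smooth invariant $J$ has no convergent graded expansion; one must iterate the invariance identity, use that negative-weight blocks would blow up, and use recurrence of $U^k$ to get constancy), and the distribution $H$ itself, not only $J$ on it, must be normalized to the model contact distribution.

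For comparison, the paper never linearizes $\vp$ and thereby sidesteps the resonance problem entirely. It first builds a canonical contact form with $\vp^*\theta=\mu\theta$ as the limit of $\mu^{-k}(\vp^k)^*\tilde\theta$ (Propositions~\ref{p;canonical} and \ref{p;smooth}; the real work is the $C^s$ estimate on the infinite product giving smoothness). Since $\theta_k=\mu^{-k}\theta$ is a pseudoconformal change with \emph{constant} factor, Proposition~\ref{p;change} shows each torsion and curvature invariant at $x$ equals $\mu^{k/2}$ (or $\mu^k$) times the same invariant at $\vp^k(x)$, which stays in a fixed neighborhood of $o$; letting $k\to\infty$ forces $\ind{T}{\beta}{\alpha}{\gamma}$, $\ind{N}{\bar\beta}{\alpha}{\bar\gamma}$, $\ind{A}{}{\alpha}{\bar\beta}$, $\ind{B}{}{\alpha}{\beta}$, $p_{\al\be;\ga}$, $\ind{R}{\beta}{\alpha}{\gamma\bar\sigma}$ to vanish identically on $\W$, whence Proposition~\ref{p;equiv} gives the local pseudo-Hermitian equivalence with $(\mh_P,\theta_P)$. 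Only your step (iii) matches the paper: the globalization via $F_k=\Lambda^{-k}\circ F\circ\vp^k$ is exactly the paper's final argument. If you wish to salvage your route, note that any ``CR Sternberg theorem'' would in effect have to reprove the vanishing of these pseudo-Hermitian invariants, so the paper's invariant-theoretic path is the shorter one.
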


The idea of the proof of Theorem \ref{t;main} is contructing a special contact form $\theta_0$ on $\mathcal W$ for which $(\mathcal W, \theta_0)$ is indeed equivalent to $(\mh_P, \theta_P)$ as pseudo-Hermitian manifolds, where $\theta_P$ is the canonical contact form on $\mh_P$ defined by \eqref{e;thetap}. The consturcution is achieved by a standard dynamical method in Section 3. 

\medskip

\noindent\textbf{Outline of the paper.} In section~\ref{sec:aCRm}, we will introduce basic notions for the strongly pseudconvex almost CR manifold, the pseudo-Hermitian structure and their equivalence problem as studied in \cite{JooLee2015}. Then the contacting CR automorpism and the stable manifold will be discussed in Section~\ref{sec:contractingCRa}. And also in Section~\ref{subsec:canonical contact form}, we will show that there is a smooth canonical contact form of the contracting CR automorphism whose pseudo-Hermitian structure is CR equivalent to the Heisenberg model for the proof of Theorem~\ref{t;main} (see Section~\ref{sec:proof}). We also characterize the ambient manifold as the standard sphere under further assumption that the contracting automorphism has another fixed point which is contracting for the inverse map in Theorem~\ref{t;thm2}.

\medskip

\noindent\textbf{Convention.} Throughout this paper, we assume that every
structure is $C^\infty$-smooth. The summation convention is always
assumed. Greek indices will be used to indicate coefficients of complex and real tensors, respectively.  We will take the bar on Greek indices to denote
the complex conjugation of the corresponding tensor coefficients:
$\overline{Z}_\alpha=Z_{\bar\alpha}$,
$\bar\omega^\alpha=\omega^{\bar\alpha}$,
$\ind{\overline{R}}{\beta}{\alpha}{\lambda\bar\mu}
=\ind{R}{\bar\beta}{\bar\alpha}{\bar\lambda\mu}$.

%%%%%%%%%%%%%%%%%%%%%%%%%%%%%%%%%%%%%%%%%%%%%%%%
%%%%%%%%%%%%%%%%%%%%%%%%%%%%%%%%%%%%%%%%%%%%%%%%
%%%%%%%%%%%%%%%%%%%%%%%%%%%%%%%%%%%%%%%%%%%%%%%%
%%%%%%%%%%%%%%%%%%%%%%%%%%%%%%%%%%%%%%%%%%%%%%%%
%%%%%%%%%%%%%%%%%%%%%%%%%%%%%%%%%%%%%%%%%%%%%%%%
%%%%%%%%%%%%%%%%%%%%%%%%%%%%%%%%%%%%%%%%%%%%%%%%
%%%%%%%%%%%%%%%%%%%%%%%%%%%%%%%%%%%%%%%%%%%%%%%%
%%%%%%%%%%%%%%%%%%%%%%%%%%%%%%%%%%%%%%%%%%%%%%%%
%%%%%%%%%%%%%%%%%%%%%%%%%%%%%%%%%%%%%%%%%%%%%%%%

\section{Strongly pseudoconvex almost CR manifolds}\label{sec:aCRm}
In this section, we review the pseudo-Hermitian structure of the strongly pseudoconvex almost CR manifold and its equivalence problem as in \cite{JooLee2015}.

\subsection{The pseudo-Hermitian structure}

By an \emph{almost CR manifold}, we mean a real $(2n+1)$-dimensional manifold $M$ with a $2n$-dimensional subbundle $H$ of $TM$ equipped with an almost complex structure $J$, which is called the almost CR structure. Since $J^2 = -\mathrm{Id}$, the complexified bundle $\CC\otimes H$ splits into two eigensubbundles $H_{1,0}$ and $H_{0,1}$ of complex dimension $n$, corresponding to eigenvalues $i$ and $-i$ of $J$, respectively:
\begin{gather*}
\CC\times H=H_{1,0}\oplus H_{0,1}\;, \\
H_{1,0}=\set{X-iJX:X\in H}\;, \quad H_{0,1}=\set{X+iJX:X\in H} \;.
\end{gather*}
 A complex-valued vector field  is of type $(1,0)$ or $(0,1)$ if it is a section to $H_{1,0}$ or $H_{0,1}$, respectively. The CR structure $(H,J)$ is \emph{integrable} if the space of $(1,0)$-vector fields, $\Gamma(H_{1,0})$ is closed under the Lie bracket: $[\Gamma(H_{1,0}),\Gamma(H_{1,0})]\subset \Gamma(H_{1,0})$. If the Lie bracket of two $(1,0)$-vector fields is always a section to $\CC\times H=H_{1,0}\oplus H_{0,1}$, then we call the CR structure is \emph{partially integrable}.

A diffeomorphism $f : M\rightarrow M$ is called a \emph{CR automorphism} if $f_*H_{1,0} = H_{1,0}$. The group of all CR automorphisms of $M$ will be denoted by $\Aut_{\mathrm{CR}}(M)$. This is the topological group with the compact-open topology.

An almost CR manifold is said to be \emph{strongly pseudoconvex} if $H$ is a contact distribution and if 
\begin{equation*}
d\theta (X, JX) >0
\end{equation*}
for a contact form $\theta$ and for any nonzero vector $X\in H$. Then the pair $(M,\theta)$ is called a \emph{pseudo-Hermitian manifold}. Let $(Z_\alpha)=(Z_1,\ldots,Z_n)$ be a $(1,0)$-frame (a local frame of $H_{1,0}$). A $\CC^n$-valued $1$-form $(\theta^\al)=(\theta^1,\ldots,\theta^n)$ is called the \emph{admissible coframe} for $\theta$ with respect to $(Z_\alpha)$ if 
\begin{equation*}
\theta^\al(Z_\be) = \delta^\al_\be,\quad \theta^\al(Z_{\bar\be}) =0,\quad \theta^\al(T)=0
\end{equation*}
for $\al,\be = 1,...,n$, where $T$ is the \emph{characteristic vector field} of $\theta$, that is, a unique real tangent vector field $\theta(T)\equiv 1$ and $T\inp d\theta\equiv 0$. The admissibility of $(\theta^\al)$ is equivalent to that the differential of the contact form $\theta$ can be written by
\beg\label{e;str1}
d\theta =
 2ig_{\alpha\bar\beta}\,\theta^\alpha\wedge\theta^{\bar\beta}
 +p_{\alpha\beta}\,\theta^\alpha\wedge\theta^\beta
 +p_{\bar\alpha\bar\beta}\,\theta^{\bar\alpha}\wedge\theta^{\bar\beta}
 \eeg
where $(g_{\al\bar\be})$ is a positive-definite hermitian symmetric  matrix which is called the Levi form, and $(p_{\al\be})$ is a skew-symmetric matrix. Throughout this paper, $(g^{\alpha\bar\beta})$ stands for the inverse matrix of the Levi form $(g_{\alpha\bar\beta})$. If $(p_{\al\be})$ vanishes identically, then the almost CR structure is partially integrable; thus we call $(p_{\al\be})$ the \emph{non-partial-integrability} with respect to $\theta, \theta^\al$. 

\medskip

Let
\begin{equation*}
\tilde\theta = e^{2f}\theta
\end{equation*}
be another contact form, where $f$ is a real-valued smooth function on $M$. 

\begin{prop}[\cite{JooLee2015}]\label{p;conformal change} Let $(\theta^\al)$ be a local admissible coframe for $\theta$ with respect to a $(1,0)$-frame $(Z_\alpha)$. Then the $\CC^n$-valued $1$-form $(\tilde\theta^\alpha)$ defined by
\begin{equation*}
\tilde\theta^\al = e^f(\theta^\al + v^\al\theta) \;,
\end{equation*}
is an admissible coframe for $\tilde\theta$, where 
$v^\al$ is defined by 
\beg\label{e;v}
f_\al = iv_\al + p_{\al\be} \,v^\be.
\eeg
where $f_\alpha = Z_\alpha f$ and $v_\alpha = v^{\bar\beta}g_{\alpha\bar\beta}$. Moreover, if we denote the Levi form and non-partial-integrbility for $\tilde\theta$ with respect to $(\tilde\theta^\al)$ by $(\tilde g_{\al\bar\be})$ and $(\tilde p_{\al\be})$, then 
\begin{equation*}
\tilde g_{\al\bar\be} = g_{\al\bar\be}, \quad \tilde p_{\al\be} = p_{\al\be}.
\end{equation*}
\end{prop}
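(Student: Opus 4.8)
The plan is to prove the proposition through the structure equation \eqref{e;str1}, which by the equivalence recorded just above characterizes admissibility. Concretely, I would exhibit a $(1,0)$-coframe completing $\tilde\theta$ and then show that $d\tilde\theta$, once expressed in it, again takes the form \eqref{e;str1}; reading off the coefficients will then yield at once the admissibility of $(\tilde\theta^\al)$ and the invariances $\tilde g_{\al\bar\be}=g_{\al\bar\be}$ and $\tilde p_{\al\be}=p_{\al\be}$.

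First I would check that \eqref{e;v} genuinely determines $v^\al$: rewriting it as $f_\al=ig_{\al\bar\be}v^{\bar\be}+p_{\al\be}v^\be$, an $\RR$-linear system in $(v^\be)$, and pairing its homogeneous form with $v^\al$, the $p$-term drops out by skew-symmetry ($p_{\al\be}v^\al v^\be=0$) while $i\,g_{\al\bar\be}v^\al v^{\bar\be}=0$ remains, so positivity of the Levi form forces $v=0$; the system is thus invertible and $v^\al$ is well defined. Next, from $\theta^\al(Z_{\bar\be})=0=\theta(Z_{\bar\be})$ it follows that $\tilde\theta^\al(Z_{\bar\be})=0$, so each $\tilde\theta^\al$ annihilates $H_{0,1}$; and since the passage from $(\theta,\theta^\al,\theta^{\bar\al})$ to $(\tilde\theta,\tilde\theta^\al,\tilde\theta^{\bar\al})$ is triangular with nonvanishing determinant, the latter is again a coframe, with dual frame $\tilde Z_\al=e^{-f}Z_\al$ together with the real characteristic candidate $\tilde T$ fixed by $\tilde\theta(\tilde T)=1$ and $\tilde\theta^\al(\tilde T)=\tilde\theta^{\bar\al}(\tilde T)=0$. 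In particular $\tilde\theta^\al(\tilde T)=0$ holds by construction.

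The heart of the argument is the computation of $d\tilde\theta$. Substituting $\tilde\theta^\al=e^f(\theta^\al+v^\al\theta)$ and using $\theta\wedge\theta=0$, the proposed right-hand side of \eqref{e;str1} becomes
\begin{align*}
2ig_{\al\bar\be}\tilde\theta^\al\wedge\tilde\theta^{\bar\be}
+p_{\al\be}\tilde\theta^\al\wedge\tilde\theta^\be
+p_{\bar\al\bar\be}\tilde\theta^{\bar\al}\wedge\tilde\theta^{\bar\be}
&=e^{2f}\,d\theta
+2e^{2f}\big(iv_\al+p_{\al\be}v^\be\big)\,\theta^\al\wedge\theta\\
&\qquad+2e^{2f}\big(-iv_{\bar\al}+p_{\bar\al\bar\be}v^{\bar\be}\big)\,\theta^{\bar\al}\wedge\theta,
\end{align*}
where the skew-symmetry of $p$ is what allows the two cross terms produced by each $p$-summand to combine. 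By \eqref{e;v} and its conjugate the cross-coefficients are exactly $f_\al$ and $f_{\bar\al}$, and since $df=f_\al\theta^\al+f_{\bar\al}\theta^{\bar\al}+(Tf)\theta$ the two cross terms collapse to $2e^{2f}\,df\wedge\theta$; thus the right-hand side equals $e^{2f}\,d\theta+2e^{2f}\,df\wedge\theta=d(e^{2f}\theta)=d\tilde\theta$.

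Since this expression for $d\tilde\theta$ contains no $\tilde\theta^\al\wedge\tilde\theta$ or $\tilde\theta^{\bar\al}\wedge\tilde\theta$ term, contracting it with $\tilde T$ gives zero, so $\tilde T$ is indeed the characteristic field of $\tilde\theta$ and $\tilde\theta^\al(\tilde T)=0$ is the genuine admissibility relation; hence $(\tilde\theta^\al)$ is an admissible coframe for $\tilde\theta$ with respect to $(\tilde Z_\al)$, and comparison with \eqref{e;str1} reads off $\tilde g_{\al\bar\be}=g_{\al\bar\be}$ and $\tilde p_{\al\be}=p_{\al\be}$. I expect the only real obstacle to be the bookkeeping in the displayed identity—keeping the $\theta^\al\wedge\theta$ and $\theta\wedge\theta^{\bar\be}$ contributions apart and invoking the skew-symmetry of $p$ at the right moment—together with the preliminary solvability of \eqref{e;v}; once these are settled, the clean identity $d\tilde\theta=e^{2f}\,d\theta+2e^{2f}\,df\wedge\theta$ carries the rest.
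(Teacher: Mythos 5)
Your proposal is correct and takes essentially the same route as the source: this paper imports Proposition \ref{p;conformal change} from \cite{JooLee2015} without reproving it, and the proof there is the same direct computation you carry out — first solving \eqref{e;v} (invertibility of the $\RR$-linear system via skew-symmetry of $p_{\al\be}$ and positivity of the Levi form), then substituting $\tilde\theta^\al=e^f(\theta^\al+v^\al\theta)$ into the structure equation to obtain $d\tilde\theta=e^{2f}\,d\theta+2e^{2f}\,df\wedge\theta$ with unchanged coefficients $g_{\al\bar\be}$ and $p_{\al\be}$. Your closing verification that $\tilde T\inp d\tilde\theta=0$, so that $\tilde T$ is the characteristic field of $\tilde\theta$ and the relation $\tilde\theta^\al(\tilde T)=0$ is genuine admissibility rather than a mere convention, is exactly the step needed to upgrade the coefficient comparison to the full statement.
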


The last argument of the proposition means that Equation~\ref{e;str1} for $\tilde\theta=e^2f$ is of the form
\begin{equation*}
d\tilde\theta =
 2ig_{\alpha\bar\beta}\,\tilde\theta^\alpha\wedge\tilde\theta^{\bar\beta}
 +p_{\alpha\beta}\,\tilde\theta^\alpha\wedge\tilde\theta^\beta
 +p_{\bar\alpha\bar\beta}\,\tilde\theta^{\bar\alpha}\wedge\tilde\theta^{\bar\beta}
 \;.
\end{equation*}

\subsection{The pseudo-Hermitian strucrure equation} 
Let $(M, \theta)$ be a pseudo-Hermitian manifold and $(\theta^\alpha)$ be an admissible coframe. The pseudo-Hermitian connection form  $(\ind{\omega}{\beta}{\alpha}{})$ for the Hermitian metric $(g_{\alpha\bar\beta})$ in \eqref{e;str1} is defined as follows.

\begin{prop}[\cite{JooLee2015}]\label{p;str2}
The pseudo-Hermitian connection form $(\ind{\omega}{\beta}{\alpha}{})$ is uniquely determined by following equations:
\begin{gather}
d\theta^\alpha
	=\theta^\beta\wedge\ind{\omega}{\beta}{\alpha}{}
	+\ind{T}{\beta}{\alpha}{\gamma}\theta^\beta\wedge\theta^\gamma
	+\ind{N}{\bar\beta}{\alpha}{\bar\gamma}\theta^{\bar\beta}\wedge\theta^{\bar\gamma}
	+\ind{A}{}{\alpha}{\bar\beta} \theta\wedge\theta^{\bar\beta}
	+\ind{B}{}{\alpha}{\beta}{}\theta\wedge\theta^\beta \;, 
	\label{e;str2} 
	\\
dg_{\alpha\bar\beta} 
	= \omega_{\alpha\bar\beta} + \omega_{\bar\beta\alpha} \;, 
	\label{e;comp1} 
	\\
\ind{T}{\beta}{\alpha}{\gamma}
	=-\ind{T}{\gamma}{\alpha}{\beta} \;, 
	\quad 
	\ind{N}{\bar\beta}{\alpha}{\bar\gamma}
	=-\ind{N}{\bar\gamma}{\alpha}{\bar\beta}\;, 
	\quad 
	B_{\al\bar\be} 
	= B_{\bar\be\al} \;.
	\label{e;comp2}
\end{gather}
\end{prop}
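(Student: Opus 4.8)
The plan is to realise both prescribed identities as a single linear system for the unknown connection coefficients and torsion tensors, read off in the complex coframe $\{\theta,\theta^1,\dots,\theta^n,\theta^{\bar1},\dots,\theta^{\bar n}\}$ of $\CC\otimes T^*M$, and to show that this system is triangular, so that every unknown is determined successively and uniquely; existence and uniqueness then follow simultaneously. I would write the sought connection form as $\ind{\omega}{\be}{\al}{}=\ind{\Gamma}{\be\ga}{\al}{}\,\theta^\ga+\ind{\Phi}{\be\bar\ga}{\al}{}\,\theta^{\bar\ga}+\ind{\Psi}{\be}{\al}{}\,\theta$ and expand $d\theta^\al$ in the basis of $2$-forms $\{\theta^\be\w\theta^\ga,\ \theta^\be\w\theta^{\bar\ga},\ \theta^{\bar\be}\w\theta^{\bar\ga},\ \theta\w\theta^\be,\ \theta\w\theta^{\bar\be}\}$. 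Matching coefficients turns \eqref{e;str2} into relations among $\Gamma,\Phi,\Psi$ and $T,N,A,B$, while decomposing \eqref{e;comp1} along $\theta^\ga,\theta^{\bar\ga},\theta$ yields three further families of scalar relations, to be solved in a fixed order.

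First I would exploit the components of \eqref{e;str2} that the torsion cannot reach. The $\theta^\be\w\theta^{\bar\ga}$-component receives no contribution from $T,N,A,B$, so it reads off $\ind{\Phi}{\be\bar\ga}{\al}{}$ directly; likewise the $\theta^{\bar\be}\w\theta^{\bar\ga}$- and $\theta\w\theta^{\bar\be}$-components determine $\ind{N}{\bar\be}{\al}{\bar\ga}$ and $\ind{A}{}{\al}{\bar\be}$, the skew-symmetry of $N$ in \eqref{e;comp2} being automatic from the wedge. Next, the $\theta^\ga$-component of \eqref{e;comp1} reads $Z_\ga g_{\al\bar\be}=\Gamma_{\al\ga\bar\be}+\overline{\Phi_{\be\bar\ga\bar\al}}$, where $\Gamma_{\al\ga\bar\be}=g_{\mu\bar\be}\ind{\Gamma}{\al\ga}{\mu}{}$; since $\Phi$ is already known this fixes $\Gamma$ \emph{completely}, symmetric part included, and its $\theta^{\bar\ga}$-component is merely the complex conjugate of this relation and imposes nothing new because $g_{\al\bar\be}$ is Hermitian. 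With $\Gamma$ in hand, the $\theta^\be\w\theta^\ga$-component of \eqref{e;str2} \emph{defines} $\ind{T}{\be}{\al}{\ga}$ as the difference of the (skew) structure coefficient and $\ind{\Gamma}{[\be\ga]}{\al}{}$; both terms are skew in $\be,\ga$, so the antisymmetry required in \eqref{e;comp2} holds automatically.

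The one place where the symmetry hypotheses in \eqref{e;comp2} do genuine work is the characteristic direction. The $\theta$-component of \eqref{e;comp1} reads $T g_{\al\bar\be}=\Psi_{\al\bar\be}+\overline{\Psi_{\be\bar\al}}$ with $\Psi_{\al\bar\be}=g_{\mu\bar\be}\ind{\Psi}{\al}{\mu}{}$, which pins down only the Hermitian part of $\Psi_{\al\bar\be}$ and leaves its skew-Hermitian part free. The $\theta\w\theta^\be$-component of \eqref{e;str2} then expresses $\ind{B}{}{\al}{\be}$ as $\ind{\Psi}{\be}{\al}{}$ plus the remaining structure coefficient, and imposing $B_{\al\bar\be}=B_{\bar\be\al}$ forces the skew-Hermitian part of $\Psi$ to be the negative of that of the structure coefficient, thereby determining $\Psi$, and hence $B$, uniquely. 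This completes the cascade and proves the proposition. For uniqueness one may also argue directly: the difference $\ind{\phi}{\be}{\al}{}$ of two solutions is tensorial; the mixed part of \eqref{e;str2} annihilates its $\theta^{\bar\ga}$-component, the $\theta^\ga$-part of \eqref{e;comp1} then annihilates its $\theta^\ga$-component, and finally its $\theta$-coefficient is simultaneously skew-Hermitian (by \eqref{e;comp1}) and equal to the Hermitian tensor $\Delta B$ (by \eqref{e;comp2}), whence it vanishes and $\phi=0$.

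The computation is routine once organised in this order; the step I expect to be the crux is precisely the last one, where one must verify that the Hermitian symmetry of $B$ supplies \emph{exactly} the skew-Hermitian datum of $\Psi$ left undetermined by metric compatibility---neither more, which would obstruct existence, nor less, which would break uniqueness. A secondary point to watch is the bookkeeping of the non-partial-integrability: the extra torsion tensors $T$ and $N$, absent in the classical integrable Tanaka--Webster case, must be tracked through the holomorphic-type components so that the symmetry conditions \eqref{e;comp2} emerge as automatic consequences of the wedge antisymmetries rather than as additional constraints.
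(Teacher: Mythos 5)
Your proof is correct, and it coincides with the paper's treatment: the paper states Proposition~\ref{p;str2} without proof, citing \cite{JooLee2015}, where the argument is exactly your coefficient-matching cascade in the coframe $\{\theta,\theta^\alpha,\theta^{\bar\alpha}\}$ --- read off the $\theta^\beta\wedge\theta^{\bar\gamma}$, $\theta^{\bar\beta}\wedge\theta^{\bar\gamma}$ and $\theta\wedge\theta^{\bar\beta}$ components, solve the $\theta^\gamma$-component of \eqref{e;comp1} for the holomorphic part of $\ind{\omega}{\beta}{\alpha}{}$, and use the Hermitian symmetry $B_{\alpha\bar\beta}=B_{\bar\beta\alpha}$ to fix the skew-Hermitian part of the $\theta$-component left undetermined by metric compatibility. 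You also correctly identify which conditions in \eqref{e;comp2} are automatic normalizations and which one does genuine work, so the existence and uniqueness bookkeeping is complete as written.
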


Then the connection form $(\ind{\omega}{\beta}{\alpha}{})$ gives the covariant derivative to tensor fields by
\begin{equation}\label{eqn:covariant derivative}
\begin{aligned}
\nabla Z_\alpha 
	&= \ind{\omega}{\alpha}{\beta}{}\otimes Z_\beta \;, 
	& \nabla Z_{\bar\alpha}
	&= \ind{\omega}{\bar\alpha}{\bar\beta}{}\otimes Z_{\bar\beta} \;,
	\\
\nabla \theta^\alpha 
	&= -\ind{\omega}{\beta}{\alpha}{}\otimes\theta^\beta \;,
	& \nabla \theta^{\bar\alpha} 
	&= -\ind{\omega}{\bar\beta}{\bar\alpha}{}\otimes\theta^{\bar\beta}
	\;.
\end{aligned}
\end{equation}
Equations \eqref{e;str1} and \eqref{e;str2} are called the structure equations, and \eqref{e;comp1} and \eqref{e;comp2} are called the compatibility conditions. Tensor coefficients $p_{\alpha\beta}$, $\ind{T}{\beta}{\alpha}{\gamma}$, $\ind{N}{\bar\beta}{\alpha}{\bar\gamma}$, $\ind{A}{}{\alpha}{\bar\beta}$,  $\ind{B}{}{\alpha}{\beta}{}$ are called the torsion coefficients. 

We define the curvature $2$-form by 
\begin{equation*}
\ind{\Theta}{\beta}{\alpha}{} = d\ind{\omega}{\beta}{\alpha}{} - \ind{\omega}{\beta}{\gamma}{}\wedge\ind{\omega}{\gamma}{\alpha}{} \;.
\end{equation*}
The coefficients of pseudo-Hermitian curvature is defined by 
\begin{equation*}
\ind{R}{\beta}{\alpha}{\gamma\bar\sigma}
	=2\ind{\Theta}{\beta}{\alpha}{}(Z_\gamma, Z_{\bar\sigma}) \;.
\end{equation*}

Let $\tilde\theta= e^{2f}\theta$ be a pseudoconformal change of the pseudo-Hermitian structure $(M, \theta)$ and let $(\tilde\theta^\alpha)$ the admissible coframe for $\tilde\theta$ defined as in Proposition \ref{p;conformal change}.

\begin{prop}[\cite{JooLee2015}]\label{p;change} The coefficieints of torsion and curvature of $\theta$ and $\tilde\theta$ with respect to $(\theta^\al)$ and $(\tilde\theta^\al)$ are related as follows.
\begin{align*}
\ind{\widetilde T}{\beta}{\alpha}{\gamma}
 	&=
 	e^{-f}\paren{
 	\ind{T}{\beta}{\alpha}{\gamma}
 	+v^\alpha p_{\beta\gamma}
 	-p_{\gamma\rho}v^\rho\ind{\delta}{}{\alpha}{\beta}
 	+p_{\beta\rho}v^\rho\ind{\delta}{}{\alpha}{\gamma}
 	} \;,
 	\\
\ind{\widetilde N}{\bar\beta}{\alpha}{\bar\gamma}
 	&=
 	e^{-f}\paren{
 	\ind{N}{\bar\beta}{\alpha}{\bar\gamma}
	+v^\alpha p_{\bar\beta\bar\gamma}
 	} \;,
 	\\
\ind{\widetilde A}{}{\alpha}{\bar\beta}
 	&=
 	e^{-2f}\paren{
 	\ind{A}{}{\alpha}{\bar\beta}
 	-g^{\alpha\bar\gamma}v_{\bar\gamma;\beta}
 	-2iv^\alpha v_{\bar\beta}
 	-2v^{\bar\gamma}\ind{N}{\bar\gamma}{\alpha}{\bar\beta}
 	-2v^\alpha v^{\bar\gamma}p_{\bar\gamma\bar\beta}
 	} \;,
 	\\
\ind{\widetilde B}{}{\alpha}{\beta}
 	&=
 	e^{-2f}\Big(
 	\ind{B}{}{\alpha}{\beta}
 	-\frac{1}{2}(g^{\alpha\bar\gamma}v_{\bar\gamma;\beta}+v_{\beta;\bar\gamma}g^{\alpha\bar\gamma})
 	+\ind{\delta}{}{\alpha}{\beta}f_0
 	\\
 	&\qquad\qquad
 	-v^\alpha v^\gamma p_{\gamma\beta}
 	-v_\beta v^{\bar\gamma}\ind{p}{\bar\gamma}{\alpha}{}
 	-v^\gamma\ind{T}{\gamma}{\alpha}{\beta}
 	-v^{\bar\gamma}\ind{T}{\bar\gamma\beta}{\alpha}{}
 	\Big) \;,
	\\
\ind{\widetilde R}{\beta}{\alpha}{\lambda\bar\mu}
	&=
 	e^{-2f}\Big(
 	\ind{R}{\beta}{\alpha}{\lambda\bar\mu}
 	-\ind{\delta}{\beta}{\alpha}{}f_{\lambda\bar\mu}
 	-\ind{\delta}{\beta}{\alpha}{}f_{\bar\mu\lambda}
 	+2ig_{\beta\bar\mu}\ind{v}{}{\alpha}{;\lambda}
 	-2iv_{\beta\bar\mu}\ind{\delta}{}{\alpha}{\lambda}
	 \\
  	&\qquad+i(g^{\alpha\bar\gamma}v_{\bar\gamma;\beta}+v_{\beta;\bar\gamma}g^{\alpha\bar\gamma})g_{\lambda\bar\mu}
	-4\ind{\delta}{\beta}{\alpha}{}g_{\lambda\bar\mu}v^\gamma v_\gamma
	-4g_{\beta\bar\mu}\ind{\delta}{}{\alpha}{\lambda}v^\gamma v_\gamma
 	\\
  	&\qquad-2ip_{\beta\gamma}v^\gamma v^\alpha g_{\lambda\bar\mu}
  	-2iv_\beta v^{\bar\gamma} \ind{p}{\bar\gamma}{\alpha}{}g_{\lambda\bar\mu}
  	+2iv^\gamma\ind{T}{\gamma}{\alpha}{\beta}g_{\lambda\bar\mu}
  	-2iv^{\bar\gamma}\ind{T}{\bar\gamma\beta}{\alpha}{}g_{\lambda\bar\mu}
 	\Big) \;.
	\\
\ind{\widetilde R}{\beta}{\alpha}{\gamma\bar\sigma}
	&=
 	e^{-2f}\Big(
 	\ind{R}{\beta}{\alpha}{\gamma\bar\sigma}
 	-\ind{\delta}{\beta}{\alpha}{}f_{\gamma\bar\sigma}
 	-\ind{\delta}{\beta}{\alpha}{}f_{\bar\sigma\gamma}
 	+2ig_{\beta\bar\sigma}\ind{v}{}{\alpha}{;\gamma}
 	-2iv_{\beta\bar\sigma}\ind{\delta}{}{\alpha}{\gamma}
	 \\
  	&\qquad+i(g^{\alpha\bar\lambda}v_{\bar\lambda;\beta}+v_{\beta;\bar\lambda}g^{\alpha\bar\lambda})g_{\gamma\bar\sigma}
	-4\ind{\delta}{\beta}{\alpha}{}g_{\gamma\bar\sigma}v^\lambda v_\lambda
	-4g_{\beta\bar\sigma}\ind{\delta}{}{\alpha}{\gamma}v^\lambda v_\lambda
 	\\
  	&\qquad-2ip_{\beta\lambda}v^\lambda v^\alpha g_{\gamma\bar\sigma}
  	-2iv_\beta v^{\bar\lambda} \ind{p}{\bar\lambda}{\alpha}{}g_{\gamma\bar\sigma}
  	+2iv^\lambda\ind{T}{\lambda}{\alpha}{\beta}g_{\gamma\bar\sigma}
  	-2iv^{\bar\lambda}\ind{T}{\bar\lambda\beta}{\alpha}{}g_{\gamma\bar\sigma}
 	\Big) \;.
\end{align*}
Moreover, the coefficients of covariant derivatives of $p_{\al\be}$ change as follows.
\begin{align*}
\tilde p_{\alpha\beta;\gamma}
 	&=
 	e^{-f}\paren{
	p_{\alpha\beta;\gamma}
	-2p_{\alpha\beta}f_\gamma
	-2ip_{\alpha\gamma}v_\beta
	-2ip_{\gamma\beta}v_\alpha
	} \;,
	\\
\tilde p_{\alpha\beta;\bar\gamma}
	&=
	e^{-f}\paren{
	p_{\alpha\beta;\bar\gamma}
	+2p_{\alpha\beta}f_{\bar\gamma}
	-2ip_{\alpha\lambda}v^\lambda g_{\beta\bar\gamma}
	-2ip_{\lambda\beta}v^\lambda g_{\alpha\bar\gamma}
	} \;.
\end{align*}
\end{prop}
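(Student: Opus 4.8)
The whole proposition rests on the uniqueness statement of Proposition~\ref{p;str2}: once I exhibit a connection form $\ind{\widetilde\om}{\be}{\al}{}$ and torsion coefficients for $\tilde\theta$ that satisfy the structure equation \eqref{e;str2} together with the compatibility conditions \eqref{e;comp1} and \eqref{e;comp2}, they must coincide with the genuine ones. So my plan is to compute $d\tilde\theta^\al$ directly from the coframe change of Proposition~\ref{p;conformal change}, rewrite it in the tilde-coframe, and split it into a connection part plus torsion parts forced by the symmetry conditions.

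Differentiating $\tilde\theta^\al = e^f(\theta^\al + v^\al\theta)$ and inserting \eqref{e;str1} and \eqref{e;str2}, together with the expansions $df = f_\ga\theta^\ga + f_{\bar\ga}\theta^{\bar\ga} + f_0\theta$ and $dv^\al = \nabla v^\al - v^\ga\ind{\om}{\ga}{\al}{}$ (with $\nabla v^\al = \ind{v}{}{\al}{;\ga}\theta^\ga + \ind{v}{}{\al}{;\bar\ga}\theta^{\bar\ga} + \ind{v}{}{\al}{;0}\theta$), yields $d\tilde\theta^\al$ as an explicit $2$-form in $\theta^\be,\theta^{\bar\be},\theta$; substituting $\theta^\al = e^{-f}\tilde\theta^\al - v^\al\theta$ recasts it in the tilde-coframe. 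I then set $\ind{\widetilde\om}{\be}{\al}{} = \ind{\om}{\be}{\al}{} + \sigma_\be^\al$ and fix the correction $\sigma_\be^\al$ by requiring that the residual terms carry the torsion symmetries of \eqref{e;comp2} and that metric compatibility \eqref{e;comp1} hold; since $\tilde g_{\al\bar\be} = g_{\al\bar\be}$ the latter reduces to $\sigma_{\al\bar\be} + \sigma_{\bar\be\al} = 0$. This determines $\sigma_\be^\al$ uniquely in terms of $f_\al, v^\al, f_0$ and $p_{\al\be}$, and reading off the leftover coefficients produces $\ind{\widetilde T}{\be}{\al}{\ga}$, $\ind{\widetilde N}{\bar\be}{\al}{\bar\ga}$, $\ind{\widetilde A}{}{\al}{\bar\be}$ and $\ind{\widetilde B}{}{\al}{\be}$. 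The defining relation \eqref{e;v}, namely $f_\al = iv_\al + p_{\al\be}v^\be$, is used throughout to trade derivatives of $f$ for $v$ and $p$.

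With $\ind{\widetilde\om}{\be}{\al}{} = \ind{\om}{\be}{\al}{} + \sigma_\be^\al$ in hand, I compute the curvature from $\ind{\widetilde\Theta}{\be}{\al}{} = \ind{\Theta}{\be}{\al}{} + D\sigma_\be^\al - \sigma_\be^\ga\w\sigma_\ga^\al$, where $D\sigma_\be^\al = d\sigma_\be^\al - \ind{\om}{\be}{\ga}{}\w\sigma_\ga^\al - \sigma_\be^\ga\w\ind{\om}{\ga}{\al}{}$ is the covariant exterior derivative. Because $\sigma_\be^\al$ is assembled from $f_\al$ and $v^\al$, this covariant derivative is what produces the second-order terms $\ind{v}{}{\al}{;\la}$, $v_{\bar\ga;\be}$, $f_{\la\bar\mu}$, $f_{\bar\mu\la}$ in the statement; evaluating $2\ind{\widetilde\Theta}{\be}{\al}{}(Z_\la, Z_{\bar\mu})$ and collecting gives $\ind{\widetilde R}{\be}{\al}{\la\bar\mu}$, and the formula for $\ind{\widetilde R}{\be}{\al}{\ga\bar\sigma}$ is the same after relabeling. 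The transformation of $p_{\al\be;\ga}$ and $p_{\al\be;\bar\ga}$ follows from the same data: since $\tilde p_{\al\be} = p_{\al\be}$ by Proposition~\ref{p;conformal change}, I express $\tilde\nabla_\ga$ and $\tilde\nabla_{\bar\ga}$ through $\nabla$, the correction $\sigma_\be^\al$, and the dual frame $\tilde Z_\al$ written in terms of $Z_\be$ and $T$, and substitute.

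The genuine difficulty here is not conceptual but organizational. The extra pieces carried by the non-partial-integrability $p_{\al\be}$ and the torsion $\ind{N}{\bar\be}{\al}{\bar\ga}$ — which vanish in the integrable CR setting of Webster and Lee but are present in this almost CR situation — must be tracked with care, since a single dropped $p$- or $N$-term propagates into the curvature expression. The other delicate point is the disciplined use of the commutation (Ricci) identities: $f_{\la\bar\mu}$ and $f_{\bar\mu\la}$ are to be retained in the symmetric combination the statement prescribes rather than silently interchanged. Checking that the resulting $\ind{\widetilde T}{\be}{\al}{\ga}$, $\ind{\widetilde N}{\bar\be}{\al}{\bar\ga}$ and $\ind{\widetilde B}{}{\al}{\be}$ obey the symmetries \eqref{e;comp2} provides a good consistency test on the whole computation.
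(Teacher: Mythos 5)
Your proposal is correct and follows essentially the same route as the source: the paper itself gives no proof of Proposition~\ref{p;change}, citing \cite{JooLee2015}, where the formulas are derived exactly as you describe --- differentiate $\tilde\theta^\al=e^f(\theta^\al+v^\al\theta)$, invoke the uniqueness in Proposition~\ref{p;str2} to identify $\ind{\widetilde\om}{\be}{\al}{}=\ind{\om}{\be}{\al}{}+\sigma_\be^\al$ from the symmetry and compatibility conditions, and read off torsion, curvature (via $D\sigma_\be^\al-\sigma_\be^\ga\w\sigma_\ga^\al$) and the $p_{\al\be;\ga}$ laws using \eqref{e;v}. The only slip worth noting is cosmetic: the curvature coefficients should be evaluated on the tilde frame $\widetilde Z_\al=e^{-f}Z_\al$ rather than on $(Z_\la,Z_{\bar\mu})$, which is precisely what produces the overall $e^{-2f}$ prefactor.
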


Here  $v_{\bar\gamma;\beta}$, $v_{\beta;\bar\gamma}$ $p_{\alpha\beta;\gamma}$, $p_{\alpha\beta;\bar\gamma}$ are coefficients of covariant derivatives of the tensors $(v_\alpha)$, $(p_{\alpha\beta})$ as defined in \eqref{eqn:covariant derivative}:
\begin{align*}
v_{\bar\gamma;\beta}
	&= Z_\beta v_{\bar\gamma} - \ind{\omega}{\bar\gamma}{\bar\sigma}{}(Z_\beta)v_{\bar\sigma}
	\;,
	\\
v_{\beta;\bar\gamma} 
	&= Z_{\bar\gamma}v_\beta - \ind{\omega}{\beta}{\sigma}{}(Z_{\bar\gamma})v_\sigma
	\;,
	\\
p_{\alpha\beta;\gamma} 
	&= Z_\gamma p_{\alpha\beta}
	-\ind{\omega}{\alpha}{\sigma}{}(Z_\gamma)p_{\sigma\beta}
	-\ind{\omega}{\beta}{\sigma}{}(Z_\gamma)p_{\alpha\sigma} \;,
	\\
p_{\alpha\beta;\bar\gamma} 
	&= Z_{\bar\gamma} p_{\alpha\beta}
	-\ind{\omega}{\alpha}{\sigma}{}(Z_{\bar\gamma})p_{\sigma\beta}
	-\ind{\omega}{\beta}{\sigma}{}(Z_{\bar\gamma})p_{\alpha\sigma} \;.
\end{align*}

%%%%%%%%%%%%%%%%%%%%%%%%%%%%%%%%
%%%%%%%%%%%%%%%%%%%%%%%%%%%%%%%%
%%%%%%%%%%%%%%%%%%%%%%%%%%%%%%%%
%%%%%%%%%%%%%%%%%%%%%%%%%%%%%%%%
%%%%%%%%%%%%%%%%%%%%%%%%%%%%%%%%
%%%%%%%%%%%%%%%%%%%%%%%%%%%%%%%%

\subsection{Generalized Heiseneberg groups}\label{subsec:models}
Let $(z,t)=(z^1,...,z^n,t)$ be the standard coordinates on $\CC^n\times\RR$ and let $P = (P_{\al\be})$ be a constant complex skew-symmetric matrix of size $n\times n$. Let 
\beg\label{e;10vector}
Z_\al = \frac{\partial}{\partial z^\al} + \left(iz^{\bar\al} + P_{\al\be}z^\be\right)\frac{\partial}{\partial t}
\eeg
for $\al=1,...,n$. The almost CR structure whose $H_{1,0}$ space is spanned by $\{Z_\al\}$ will be denoted by $J_P$. We define the generalized Heisenberg group corresponding to $P$ by the almost CR manifold $(\CC^n\times\RR, J_P)$ and will denote it by $\mh_P$. In case $P=0$, $\mh_0$ is the classic Heisenberg group. Generally, we have

\begin{prop}[\cite{JooLee2015}]\label{p;heisenberg}
Let $* = *_P$ be the binary operation on $\mh_P$ defined by 
$$(z,t)*(z', t') = \left(z+z', t+t' + 2\mathrm{Im}\,(z^\al z'^{\bar\al}) - 2\mathrm{Re}\, (P_{\al\be}z^\al z'^{\be})\right).$$
Then this operation makes $\mh_P$ a Lie group and $Z_\al$ defined in \eqref{e;10vector} is left invariant under $*$. In particular, $\mh_P$ is a homogeneous almost CR manifold. 
\end{prop}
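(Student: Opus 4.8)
The plan is to verify the group axioms directly, then compute the differential of a left translation to establish the invariance of the $Z_\al$. It is convenient to abbreviate the cocycle term by writing $A(z,z') = 2\IM(z^\al z'^{\bar\al}) - 2\RE(P_{\al\be}z^\al z'^\be)$, so that the operation reads $(z,t)*(z',t') = (z+z',\, t+t'+A(z,z'))$. Smoothness of $*$ is immediate, since $A$ is a real polynomial in the coordinates. I would take $(0,0)$ as the identity and $(-z,-t)$ as the inverse of $(z,t)$: the relevant check is that $A(z,-z)=0$, which holds because $z^\al z^{\bar\al}$ is real (so its imaginary part vanishes) and $P_{\al\be}z^\al z^\be=0$ by the skew-symmetry of $P$.

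For associativity, the key observation that removes all real computation is that $A$ is $\RR$-biadditive: the term $2\IM(z^\al z'^{\bar\al})$ is $\RR$-bilinear, while $-2\RE(P_{\al\be}z^\al z'^\be)$ is $\CC$-bilinear, so each is additive in both slots. Expanding the two triple products shows that associativity is equivalent to the cocycle identity $A(z,z') + A(z+z',z'') = A(z',z'') + A(z,z'+z'')$, and biadditivity collapses both sides to $A(z,z')+A(z,z'')+A(z',z'')$. This establishes that $*$ makes $\mh_P$ a Lie group.

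To prove left invariance of $Z_\al$, I would write the left translation $L_{(a,s)}(z,t)=(a,s)*(z,t)$ in terms of image coordinates $w^\be = a^\be + z^\be$ and $u = s+t+A(a,z)$. Since $w^\be$ is independent of $t$ and $\partial u/\partial t = 1$, the pushforward sends $\partial/\partial t$ to $\partial/\partial u$; and since $\partial w^\be/\partial z^\al = \delta^\be_\al$ while $\partial w^{\bar\be}/\partial z^\al = 0$, the only remaining ingredient is $\partial u/\partial z^\al = \partial A(a,z)/\partial z^\al$. A short Wirtinger-derivative computation gives $\partial A(a,z)/\partial z^\al = i a^{\bar\al} + P_{\al\ga}a^\ga$, the skew-symmetry of $P$ entering through $-P_{\ga\al}a^\ga = P_{\al\ga}a^\ga$. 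Assembling the pieces,
\[
(L_{(a,s)})_* Z_\al = \frac{\partial}{\partial w^\al} + \big(i a^{\bar\al} + P_{\al\ga}a^\ga\big)\frac{\partial}{\partial u} + \big(iz^{\bar\al} + P_{\al\be}z^\be\big)\frac{\partial}{\partial u} = \frac{\partial}{\partial w^\al} + \big(iw^{\bar\al} + P_{\al\be}w^\be\big)\frac{\partial}{\partial u},
\]
which is exactly $Z_\al$ at the image point. Hence each $Z_\al$ is left invariant, so $H_{1,0}=\mathrm{span}\{Z_\al\}$ is preserved by every left translation; the left translations are therefore CR automorphisms, and since they act transitively, $\mh_P$ is a homogeneous almost CR manifold.

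I expect the main obstacle to be the bookkeeping in the pushforward computation, in particular keeping the Wirtinger derivatives and the skew-symmetry of $P$ straight; the conceptual step that trivializes the associativity check is the recognition that $A$ is $\RR$-biadditive, which turns the cocycle condition into an identity.
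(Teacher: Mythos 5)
Your proof is correct: the cocycle identity via $\RR$-biadditivity of $A$ settles associativity, and the Wirtinger computation $\partial A(a,z)/\partial z^\al = ia^{\bar\al}+P_{\al\ga}a^\ga$ (using $-P_{\ga\al}=P_{\al\ga}$) is exactly what makes $(L_{(a,s)})_*Z_\al = Z_\al$ at the image point. The paper itself imports this proposition from \cite{JooLee2015} without reproducing a proof, so there is nothing in this document to compare against; your direct verification is the standard argument one would expect, and the only cosmetic omission is that you check $A(z,-z)=0$ but not $A(-z,z)=0$, which follows identically (or from biadditivity together with $A(z,z)=0$).
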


Since the canonical $(1,0)$-vector fields $Z_1,\ldots,Z_n$ of $J_P$ satisfy
\begin{equation}\label{eqn:bracket}
[Z_\alpha,Z_{\bar\beta}]=-2i\delta_{\alpha\bar\beta}\pd{}{t}
 \; ,
 \quad
[Z_\alpha,Z_\beta]=-2P_{\alpha\beta}\pd{}{t}
 \;,
\end{equation}
the CR structure of $\mh_P$ is integrable if and only if $P=0$.  Each model $\mh_P$ admits the \emph{dilation} $D_\tau(z,t)=(e^\tau z,e^{2\tau}t)$ as a CR automorphism. Therefore the Heisenberg group model has a CR-contraction at the origin $0$ so at every point of $\mh_P$ by the homogeneity.

Let $\theta_P$ be a real $1$-form on $\mh_P$ defined by
\begin{equation}\label{e;thetap}
\theta_P = dt+ iz^\al dz^{\bar\al} - iz^{\bar\al}dz^\al + P_{\al\be}z^\al dz^\be + P_{\bar\al\bar\be}z^{\bar\al}dz^{\bar\be}.
\end{equation}
Note that $\theta_P(Z_\al) =0$ for every $\al=1,...,n$. Moreover, 
\begin{equation*}
d\theta_P = 2i dz^\al\w dz^{\bar\al} + P_{\al\be}dz^\al\w dz^\be + P_{\bar\al\bar\be}dz^{\bar\al}\w dz^{\bar\be}.
\end{equation*}
Therefore, we see that $\mh_P$ is a strongly pseudoconvex almost CR manifold and $(\theta^\al = dz^\al)$ is an admissible coframe for $\theta$ which is dual to $(Z_\al)$. Since $d\theta^\al = ddz^\al =0$, we see that $(\mh_P, \theta_P)$ has a vanishing pseudo-Hermitian connection $(\ind{\omega}{\beta}{\alpha}{})$. Moreover all coefficients of torsion  and curvature, except $p_{\alpha\beta}$, vanish identically and $p_{\alpha\beta}\equiv P_{\alpha\beta}$. Conversely,

\begin{prop}[\cite{JooLee2015}]\label{p;equiv}
Let $(M,\theta)$ be a pseudo-Hermitian manifold. If 
\begin{equation*}
p_{\alpha\beta;\gamma}
 \equiv\ind{T}{\beta}{\alpha}{\gamma}
 \equiv\ind{N}{\bar\beta}{\alpha}{\bar\gamma}
 \equiv\ind{A}{}{\alpha}{\bar\beta}
 \equiv\ind{B}{}{\alpha}{\beta}
 \equiv\ind{R}{\beta}{\alpha}{\lambda\bar\mu}
 \equiv0
\end{equation*}
for some (and hence for all) admissible coframe, then $(M,\theta)$ is locally equivalent to $(\mh_P, \theta_P)$ as a pseudo-Hermitian manifold.
\end{prop}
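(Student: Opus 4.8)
The plan is to show that the hypotheses force the pseudo-Hermitian structure of $(M,\theta)$ to be locally flat, in the precise sense that one can choose a $(1,0)$-frame in which the connection form $\ind{\omega}{\beta}{\alpha}{}$ vanishes, the Levi form is the constant identity $\delta_{\alpha\bar\beta}$, and $p_{\alpha\beta}$ is a constant skew-symmetric matrix $P_{\alpha\beta}$. As noted just before Proposition~\ref{p;equiv}, these are exactly the defining features of the model $(\mh_P,\theta_P)$ in the coframe $\theta^\alpha=dz^\alpha$. Having produced such a frame, I would then integrate the resulting structure equations by the Poincar\'e lemma to manufacture explicit coordinates $(z,t)$ realizing $\theta=\theta_P$ and $\theta^\alpha=dz^\alpha$.

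First I would extract flatness from the hypotheses by differentiating the structure equations. Differentiating \eqref{e;str2} and imposing $\ind{T}{\beta}{\alpha}{\gamma}=\ind{N}{\bar\beta}{\alpha}{\bar\gamma}=\ind{A}{}{\alpha}{\bar\beta}=\ind{B}{}{\alpha}{\beta}=0$ reduces the first Bianchi identity to $\theta^\beta\wedge\ind{\Theta}{\beta}{\alpha}{}=0$. The key structural fact — and the step I expect to be the main obstacle — is that every component of the curvature $2$-form $\ind{\Theta}{\beta}{\alpha}{}$ other than its $\theta^\gamma\wedge\theta^{\bar\sigma}$ part is a universal expression in the torsion coefficients and their covariant derivatives; this is the almost-CR analogue of Webster's second structure identity, in which the $\theta\wedge\theta^\gamma$-- and $\theta^\gamma\wedge\theta^\sigma$--type components of the curvature carry precisely the covariant derivatives of $\ind{A}{}{\alpha}{\bar\beta}$, $\ind{B}{}{\alpha}{\beta}$, $\ind{T}{\beta}{\alpha}{\gamma}$, $\ind{N}{\bar\beta}{\alpha}{\bar\gamma}$ and $p_{\alpha\beta}$. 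Because all the torsion tensors vanish identically, so do their covariant derivatives, and the only possibly surviving component is the $(1,1)$-part $\ind{R}{\beta}{\alpha}{\gamma\bar\sigma}$, which vanishes by hypothesis. Hence $\ind{\Theta}{\beta}{\alpha}{}\equiv 0$ and the pseudo-Hermitian connection is flat. Writing out the full Bianchi identities and checking that no genuinely new curvature component survives in the partially non-integrable setting is where the real work lies.

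Next I would pass to an adapted frame. Working on a simply connected neighborhood, flatness yields a parallel $(1,0)$-frame in which $\ind{\omega}{\beta}{\alpha}{}\equiv 0$; since the connection is metric by \eqref{e;comp1}, the Levi form is then constant, and a further \emph{constant} linear change of frame normalizes $g_{\alpha\bar\beta}=\delta_{\alpha\bar\beta}$ without reintroducing the connection. In this frame covariant differentiation is ordinary differentiation, so the hypothesis $p_{\alpha\beta;\gamma}=0$ reads $Z_\gamma p_{\alpha\beta}=0$; differentiating \eqref{e;str1} (now with $d\theta^\alpha=0$) and separating terms by type forces the remaining derivatives $Z_{\bar\gamma}p_{\alpha\beta}$ and $Tp_{\alpha\beta}$ to vanish as well, so $p_{\alpha\beta}\equiv P_{\alpha\beta}$ is constant. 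The structure equations then collapse to $d\theta^\alpha=0$ and $d\theta=2i\,\theta^\alpha\wedge\theta^{\bar\alpha}+P_{\alpha\beta}\,\theta^\alpha\wedge\theta^\beta+P_{\bar\alpha\bar\beta}\,\theta^{\bar\alpha}\wedge\theta^{\bar\beta}$. The relation $d\theta^\alpha=0$ lets me apply the Poincar\'e lemma to write $\theta^\alpha=dz^\alpha$; setting $\eta=iz^\alpha dz^{\bar\alpha}-iz^{\bar\alpha}dz^\alpha+P_{\alpha\beta}z^\alpha dz^\beta+P_{\bar\alpha\bar\beta}z^{\bar\alpha}dz^{\bar\beta}$, a direct computation gives $d\eta=d\theta$, so $\theta-\eta$ is a closed real $1$-form and the Poincar\'e lemma supplies a real function $t$ with $\theta-\eta=dt$.

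Finally I would verify that $\Phi=(z^1,\dots,z^n,t)$ is the desired local equivalence. Because $\eta$ lies in the span of $dz^\alpha,dz^{\bar\alpha}$ and $\{\theta,\theta^\alpha,\theta^{\bar\alpha}\}$ is a coframe, the differentials $dz^\alpha=\theta^\alpha$, $dz^{\bar\alpha}=\theta^{\bar\alpha}$ and $dt=\theta-\eta$ are pointwise independent, so $\Phi$ is a local diffeomorphism onto an open subset of $\mh_P$. By construction $\Phi^*(dz^\alpha)=\theta^\alpha$, and comparing with \eqref{e;thetap} gives $\Phi^*\theta_P=dt+\eta=\theta$. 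Thus $\Phi$ identifies $(M,\theta)$ locally with $(\mh_P,\theta_P)$ as pseudo-Hermitian manifolds, which is the assertion of Proposition~\ref{p;equiv}.
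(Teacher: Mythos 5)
Your proposal is correct, and there is nothing in the paper to compare it against line by line: Proposition~\ref{p;equiv} is imported from \cite{JooLee2015} without proof, and your argument (flatness of the pseudo-Hermitian connection, a parallel frame with constant $g_{\alpha\bar\beta}=\delta_{\alpha\bar\beta}$ and constant $p_{\alpha\beta}=P_{\alpha\beta}$, then integration by the Poincar\'e lemma to build the coordinates $(z,t)$ with $\theta^\alpha=dz^\alpha$ and $\theta=\theta_P$) is exactly the natural route, matching the way the paper sets up the model in Section~\ref{subsec:models}.

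One comment on the step you flag as ``the main obstacle'': it closes more easily than you suggest, and you do not need the full almost-CR analogue of Webster's second structure identity, nor need you worry about quadratic $p$-terms contaminating the curvature. Since $p_{\alpha\beta}$ enters only \eqref{e;str1} and not \eqref{e;str2}, once the torsions $\ind{T}{\beta}{\alpha}{\gamma}$, $\ind{N}{\bar\beta}{\alpha}{\bar\gamma}$, $\ind{A}{}{\alpha}{\bar\beta}$, $\ind{B}{}{\alpha}{\beta}$ vanish, $d^2\theta^\alpha=0$ gives the exact identity $\theta^\beta\wedge\ind{\Theta}{\beta}{\alpha}{}=0$ with no torsion remainder; decomposing $\ind{\Theta}{\beta}{\alpha}{}$ by type, this kills its $(0,2)$-part and its $\theta\wedge\theta^{\bar\sigma}$-part outright. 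Differentiating \eqref{e;comp1} yields the general skew-Hermitian symmetry $\Theta_{\alpha\bar\beta}+\Theta_{\bar\beta\alpha}=0$ of the curvature of a metric connection (the quadratic connection terms cancel identically), which transfers the vanishing of the $(0,2)$- and $\theta\wedge\theta^{\bar\sigma}$-parts to the $(2,0)$- and $\theta\wedge\theta^\gamma$-parts by conjugation; the hypothesis $\ind{R}{\beta}{\alpha}{\gamma\bar\sigma}\equiv0$ then disposes of the only surviving $(1,1)$-part, so $\ind{\Theta}{\beta}{\alpha}{}\equiv0$. The rest of your argument — that in the parallel normalized frame $d^2\theta=0$ splits by type into $p_{\alpha\beta;\bar\gamma}=0$ and $Tp_{\alpha\beta}=0$ so that $p$ is constant, and that $\Phi=(z,t)$ pulls $\theta_P$ back to $\theta$ and matches the coframes (hence is CR) — is sound as written.
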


%%%%%%%%%%%%%%%%%%%%%%%%%%%%%%%%
%%%%%%%%%%%%%%%%%%%%%%%%%%%%%%%%
%%%%%%%%%%%%%%%%%%%%%%%%%%%%%%%%
%%%%%%%%%%%%%%%%%%%%%%%%%%%%%%%%
%%%%%%%%%%%%%%%%%%%%%%%%%%%%%%%%
%%%%%%%%%%%%%%%%%%%%%%%%%%%%%%%%

\subsection{CR mappings and diffeomorphisms of Heisenberg models}
As we mentioned, except the standard Heisenberg group $\mh_0$, every CR structure of $\mh_P$ is non-integrable. The non-integrability of the structure gives some restriction to the CR mappings. The following lemma for this observation will be used in the proof of Theorem~\ref{t;thm2}.

\begin{lem}[cf. Proposition~3.3 in \cite{JooLee2015}]\label{lemma:CR mapping}
Let $\mh_P$ and $\mh_{P'}$ be non-integrable Heisenberg group models of the same dimension. If there is a local CR diffeomorphism
\begin{equation*}
G(z,t) = (w,s)=(w^1,\ldots,w^n,s)
\end{equation*}
from $\mh_P$ to $\mh_P'$, then 
\begin{itemize}
\item[$(1)$]  each $w^\lambda$ ($\lambda=1,\ldots,n$) is indepedent of $t$-variable and holomorphic in $z$-variables;
\item[$(2)$] $s(t)=rt+c(z)$ for some constant $r$ and real-valued function $c$;
\item[$(3)$] the constant $r$ is determined by
\begin{equation*}
r=\frac{1}{n}\sum_{\alpha,\beta=1}^n \abs{\pd{w^\beta}{z^\alpha}}^2 \;.
\end{equation*}
\end{itemize}
\end{lem}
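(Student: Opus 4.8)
The plan is to convert the hypothesis that $G$ is a CR diffeomorphism into explicit differential equations for the component functions $w^\lambda$ and $s$, and then to extract all three assertions by playing these equations against the two structure brackets \eqref{eqn:bracket} of $\mh_P$ (and their conjugates). Concretely, I would first write $G_*Z_\alpha$ in the coordinate frame $\{\pd{}{w^\lambda},\pd{}{w^{\bar\lambda}},\pd{}{s}\}$ of $\mh_{P'}$. Since $G$ is CR, $G_*Z_\alpha$ must lie in the span of the $(1,0)$-fields $Z'_\lambda=\pd{}{w^\lambda}+(iw^{\bar\lambda}+P'_{\lambda\mu}w^\mu)\pd{}{s}$ of $\mh_{P'}$. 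Comparing the $\pd{}{w^{\bar\lambda}}$-components gives the CR equations $Z_{\bar\alpha}w^\lambda=0$, and comparing the $\pd{}{s}$-component gives $Z_\alpha s=\ind{A}{\alpha}{\lambda}{}(iw^{\bar\lambda}+P'_{\lambda\mu}w^\mu)$, where I abbreviate $\ind{A}{\alpha}{\lambda}{}=Z_\alpha w^\lambda$. These two relations are the foundation for everything.

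For assertion $(1)$ I would apply the conjugated bracket $[Z_{\bar\alpha},Z_{\bar\beta}]=-2P_{\bar\alpha\bar\beta}\pd{}{t}$ to the CR function $w^\lambda$. Because $Z_{\bar\gamma}w^\lambda=0$, the left-hand side is $Z_{\bar\alpha}(Z_{\bar\beta}w^\lambda)-Z_{\bar\beta}(Z_{\bar\alpha}w^\lambda)=0$, so $P_{\bar\alpha\bar\beta}\,\pd{w^\lambda}{t}=0$. Since $\mh_P$ is non-integrable, some $P_{\bar\alpha\bar\beta}\neq0$, forcing $\pd{w^\lambda}{t}=0$; the CR equation then collapses to $\pd{w^\lambda}{z^{\bar\alpha}}=0$, so each $w^\lambda$ is holomorphic in $z$ and independent of $t$. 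In particular $\ind{A}{\alpha}{\lambda}{}=\pd{w^\lambda}{z^\alpha}$ is holomorphic in $z$.

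Next I would differentiate the expression for $Z_\alpha s$ once more and feed it into the two remaining brackets. Applying $[Z_\alpha,Z_{\bar\beta}]=-2i\delta_{\alpha\bar\beta}\pd{}{t}$ to $s$, and using $\pd{w}{t}=0$ together with $Z_{\bar\beta}w^\mu=0$, a short computation collapses to $AA^{*}=\paren{\pd{s}{t}}I$; taking the trace yields $\pd{s}{t}=\frac{1}{n}\sum_{\alpha,\beta}\abs{\pd{w^\beta}{z^\alpha}}^2$, which is already the claimed $r$ and is manifestly independent of $t$. Applying $[Z_\alpha,Z_\beta]=-2P_{\alpha\beta}\pd{}{t}$ to $s$ and antisymmetrizing in $\alpha,\beta$ (the symmetric Hessian term $\pd{}{z^\beta}\ind{A}{\alpha}{\lambda}{}$ drops out), I expect to obtain $A\,P'\,A^{T}=\paren{\pd{s}{t}}P$.

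The step I expect to be the main obstacle is upgrading ``$\pd{s}{t}$ is independent of $t$'' to ``$\pd{s}{t}$ is constant'', which is what assertion $(2)$ requires. Here the last identity is decisive: its left-hand side $A\,P'\,A^{T}$ is holomorphic in $z$ (each $\ind{A}{\alpha}{\lambda}{}$ is), hence $\paren{\pd{s}{t}}P$ is holomorphic in $z$; choosing indices with $P_{\alpha\beta}\neq0$ (possible by non-integrability of $\mh_P$) shows $\pd{s}{t}$ is holomorphic, and being real-valued it must be a constant $r$. Integrating $\pd{s}{t}=r$ then gives $s=rt+c$ with $c$ real-valued and independent of $t$, which is $(2)$, while the trace computation above is exactly $(3)$.
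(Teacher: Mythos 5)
Your proposal is correct, and its skeleton coincides with the paper's proof: both convert the CR hypothesis into the relations $Z_{\bar\alpha}w^\lambda=0$ and $Z_\alpha s=(Z_\alpha w^\lambda)\,K'_\lambda\circ G$, and then play the three commutation relations of \eqref{eqn:bracket} against the non-integrability hypothesis $P_{\alpha\beta}\neq 0$. Your derivation of $(1)$, applying $[Z_{\bar\alpha},Z_{\bar\beta}]=-2P_{\bar\alpha\bar\beta}\,\pdl{}{t}$ to the scalar $w^\lambda$, is just the conjugate, scalar-level form of the paper's comparison of the $\pdl{}{w^{\bar\mu}}$-components in the vector-field identity $[dF(Z_\alpha),dF(Z_\beta)]=-2P_{\alpha\beta}\,dF(\pdl{}{t})$; likewise your matrix identities $AA^{*}=\paren{\pdl{s}{t}}I$ and $A\,P'\,A^{T}=\paren{\pdl{s}{t}}P$ are exactly the paper's displayed formulas $\delta_{\alpha\bar\beta}\,\pdl{s}{t}=\delta_{\lambda\bar\mu}(\partial_\alpha w^\lambda)(\partial_{\bar\beta} w^{\bar\mu})$ and $\pdl{s}{t}=P'_{\lambda\mu}(\partial_\alpha w^\lambda)(\partial_\beta w^\mu)/P_{\alpha\beta}$ (your version incidentally corrects the paper's typo $\pd{t}{s}$ in the last display, which should read $\pd{s}{t}$). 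The one genuine divergence is the step you correctly identified as the crux: upgrading $t$-independence of $r=\pdl{s}{t}$ to constancy. The paper returns to the third CR relation $Z_\alpha s=\pdl{s}{z^\alpha}+K_\alpha\,\pdl{s}{t}=(\pdl{w^\lambda}{z^\alpha})\,K'_\lambda\circ F$ and observes that all other terms are $t$-independent, so $\pdl{s}{z^\alpha}=(\pdl{r}{z^\alpha})t+\pdl{c}{z^\alpha}$ forces $\pdl{r}{z^\alpha}=0$; you instead observe that $A\,P'\,A^{T}$ is holomorphic in $z$, so dividing by a nonzero $P_{\alpha\beta}$ makes $r$ a real-valued holomorphic function, hence constant. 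Both arguments are sound: yours is slicker and avoids using the third relation of the CR system at this point, while the paper's is more elementary in that it never invokes the fact that a real-valued holomorphic function is constant. The only caveat (which applies equally to the paper's version, since both only show $dr=0$) is that one should assume the domain connected, or state the conclusion as ``locally constant.''
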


\begin{proof}
We will use $(w,s)=(w^1,\ldots,w^n,s)$ as a standard coordinates of $\mh_{P'}$ also. Let $K_\alpha(z)= iz^{\bar\alpha}+\mrt_{\alpha\beta}z^\beta$ and $K'_\lambda(w)=iw^{\bar\lambda}+P'_{\lambda\mu}w^\mu$. Then the canonical $(1,0)$-frames $(Z_\alpha)$ and $(Z'_\lambda)$ of $\mh_P$ and $\mh_{P'}$ can be determined by
\begin{equation*}
Z_\alpha=\pd{}{z^\alpha}+K_\alpha\pd{}{t}
\quad\text{and}\quad
Z'_\lambda=\pd{}{w^\lambda}+K'_\lambda\pd{}{s}
 \;,
\end{equation*}
respectively. Let us consider
\begin{equation*}
dF(Z_\alpha) = (Z_\alpha w^\lambda)\pd{}{w^\lambda} +(Z_\lambda w^{\bar\mu})\pd{}{w^{\bar\mu}} + (Z_\alpha s)\pd{}{s} \;.
\end{equation*}
Since $F$ is a CR mapping, we have $dF(Z_\alpha)=\ind{a}{\alpha}{\lambda}{}Z'_\lambda$ for some functions $\ind{a}{\alpha}{\lambda}{}$. Comparing two expressions of $dF(Z_\alpha)$, we can conclude that $\ind{a}{\alpha}{\lambda}{} = Z_\alpha w^\lambda$ so
\begin{equation}\label{5-conclusion}
dF(Z_\alpha)
 =
 (Z_\alpha w^\lambda)Z'_\lambda
 \;,
 \quad
Z_\alpha w^{\bar\mu}=0
 \;, \quad
 Z_\alpha s=(Z_\alpha w^\lambda)K'_\lambda\circ F
 \;.
\end{equation}

\medskip

From the non-integrability of $\mh_P$, we can choose $\alpha,\beta$ so that $P_{\alpha\beta}\neq0$. Then we have
\begin{equation}\label{eqn:first commutator}
[dF(Z_\alpha),dF(Z_\beta)] = dF([Z_\alpha,Z_\beta]) = -2\mrt_{\alpha\beta}dF\paren{\pdl{}{t}}
\end{equation}
from \eqref{eqn:bracket}. One can see that the local vector field 
\begin{equation*}
[dF(Z_\alpha),dF(Z_\beta)] = [(Z_\alpha w^\lambda)Z'_\lambda,(Z_\beta w^\mu)Z'_\mu]
\end{equation*}
on $\mh_{P'}$ has no terms in $\pdl{}{w^{\bar\mu}}$. From
\begin{equation*}
-2P_{\alpha\beta}dF\paren{\pdl{}{t}}
= -2P_{\alpha\beta}\paren{
	\pd{w^\lambda}{t}\pd{}{w^\lambda}
	+\pd{w^{\bar\mu}}{t}\pd{}{w^{\bar\mu}}
	+\pd{s}{t}\pd{}{s}
	}
\end{equation*}
we have
\begin{equation*}
\pd{w^{\bar\mu}}{t}=0 \quad\text{so}\quad \pd{w^{\mu}}{t}=0
\end{equation*}
for each $\mu=1,\ldots,n$. Simultaneously, we have
\begin{equation*}\label{6-conclusion}
Z_\alpha w^\lambda=\pd{w^\lambda}{z^\alpha} \; , \quad Z_{\bar\alpha}w^\lambda=\pd{w^\lambda}{z^{\bar\alpha}}=0 
\end{equation*}
from \eqref{5-conclusion}. This implies that each $w^\lambda$ is independent of $t$-variable and holomorphic in $z$-variables. Indeed $w^\alpha$ is defined on the open set $\cU_1=\pi_1(\cU)$ in $\CC^n$ where $\cU$ is the domain of $F$ in $\CC^n\times\RR$ and $\pi_1:\CC^n\times\RR\to\CC^n$ is the natural projection.  When we write
\begin{equation*}
[dF(Z_\alpha),dF(Z_\beta)]  
	= \left[ \pd{w^\lambda}{z^\alpha}Z'_\lambda,\pd{w^\mu}{z^\beta}Z'_\mu\right] 
	= \ind{b}{\alpha\beta}{\lambda}{}Z'_\lambda
	-2P'_{\lambda\mu} \pd{w^\lambda}{z^\alpha}\pd{w^\mu}{z^\beta}\pd{}{s}
\end{equation*}
by some functions $\ind{b}{\alpha\beta}{\lambda}{}$, applying $dF\paren{\pdl{}{t}} = (\pdl{s}{t})\pdl{}{s}$ to \eqref{eqn:first commutator} we have that $\ind{b}{\alpha\beta}{\lambda}{}=0$ for any $\lambda$ so that
\begin{equation*}\label{6-c}
\pd{s}{t}
 =\frac{P'_{\lambda\mu}}{P_{\alpha\beta}}(\partial_\alpha w^\lambda)(\partial_\beta w^\mu)
 \;.
\end{equation*}
Note that $P_{\alpha\beta}\neq 0$ by the choice of $\alpha,\beta$. This implies that $\pdl{s}{t}$ is independent of $t$-variable so $s(z,t)=r(z)t+c(z)$ for some smooth real-valued functions $r=\pdl{s}{t}$ and $c$ defined on $\cU_1$.  The third equation of \eqref{5-conclusion} can be written as
\begin{equation*}%\label{5-conclusion-third}
 Z_\alpha s
 	= \pd{s}{z^\alpha}+K_\alpha\pd{s}{t}= \pd{w^\lambda}{z^\alpha} K'_\lambda\circ F
\end{equation*}
Since $K_\alpha$, $\pdl{s}{t}$, $\pdl{w^\lambda}{z^\alpha}$, $K'_\lambda\circ F$ are all independent of $t$-variable, so is $\pdl{s}{z^\alpha}$. This means that $\pdl{r}{z^\alpha}=0$ for each $\alpha$ because $\pdl{s}{z^\alpha} = (\pdl{r}{z^\alpha})t+\pdl{c}{z^\alpha}$. Hence the real-valued function $r$ is a constant: $s(z,t)=rt+c(z)$.

\medskip

From $[dF(Z_\alpha),dF(Z_{\bar\beta})]=dF([Z_\alpha,Z_{\bar\beta}])=-2i\delta_{\alpha\bar\beta}dF(\partial_t)$, one can easily get
\begin{equation*}
\delta_{\alpha\bar\beta}\pd{t}{s}
 =
 \delta_{\lambda\bar\mu}\pd{w^\lambda}{z^\alpha}\pd{w^{\bar\mu}}{z^{\bar\beta}}
 \;, \quad
 \text{so}\quad
 n\pd{t}{s}=\sum_{\alpha,\lambda}\abs{\pd{w^\lambda}{z^\alpha}}^2
 \; .
\end{equation*}
This completes the proof.
\end{proof}

The standard Heisenberg group $\mh_0$ is CR-equivalent to $S^{2n+1}\setminus\{p\}$, the unit sphere minus one point. Therefore $\mh_0\setminus\{0\}$ is also equivalent to $S^{2n+1}\setminus\{p,-p\}$. By the symmetry of $S^{2n+1}$, $\mh_0\setminus\{0\}$ admits a non-trivial involution CR automorphism: 
\begin{equation*}
(z,t)\mapsto \paren{\frac{z}{-\abs{z}^2+it},\frac{-t}{\abs{z}^4+t^2}} \;.
\end{equation*}
But for a non-integrable Heisenberg model $\mh_P$, every CR automorphism of $\mh_P\setminus\{0\}$ is extended to a global CR automorphism of $\mh_P$ (Proposition~3.3 in \cite{JooLee2015}).

%\begin{prop}\label{prop:extension}
%Let $\mh_P$ and $\mh_{P'}$ be Heisenberg group models with non-integrable CR structures. If there is a CR diffeomorphism from $\mh_P\setminus\{0\}$ to $\mh_{P'}\setminus\{0\}$, then it is linear and extends to a CR diffeomorphism from $\mh_P$ to $\mh_{P'}$.
%\end{prop}

As an application of Lemma~\ref{lemma:CR mapping}, we can describe the CR automorphism group of the Heisenberg group model (see \cite{JooLee2015}).

\begin{thm}\label{thm:CR automorphism group}
Let $\mh_P$ be a Heisenberg group model.
\begin{itemize}
\item[$(1)$] $\mh_P$ is CR equivalent to a Heisenberg group model $\mh_{P'}$ if and only if there is a unitary matrix $U=(\ind{U}{\beta}{\alpha}{})\in\UG(n)$ with $\mrt_{\alpha\beta}
    =\ind{U}{\alpha}{\lambda}{}\mrt'_{\lambda\mu}\ind{U}{\beta}{\mu}{}$.
\item[$(2)$] The isotropy group $\Aut_0(\mh_P)=\set{F\in\Aut(\mh_P):F(0)=0}$
    can be composed by
    \begin{equation*}
    \Aut_0(\mh_P)=\Aut_0(\mh_P,\theta_P)\oplus\{D_\tau\}
    \end{equation*}
    where $\Aut_0(\mh_P,\theta_\mrt)$ is the pseudo-Hermitian isotropy group.
\item[$(3)$] If $\mrt=0$ equivalently $J_P$ is integrable, the pseudo-Hermitian isotropy group $\Aut_0(\mh_0,\theta_0)$ is isomorphic to the unitary group $\UG(n)$. If $\mrt\neq0$, then
    \begin{equation*}
    \Aut_0(\mh_P,\theta_P)\simeq\set{U\in\UG(n):U^tP U=P}
    \; .
    \end{equation*}
More precisely, every element of $\Aut_0(\mh_P,\theta_P)$ of the form:
\begin{equation*}
(z,t)\to (U(z),t) \;.
\end{equation*}
for some $U\in\UG(n)$.
\end{itemize}
\end{thm}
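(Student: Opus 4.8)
The plan is to derive every assertion from Lemma~\ref{lemma:CR mapping} together with the commutator relations \eqref{eqn:bracket}, handling the integrable model $\mh_0$ (where the lemma does not apply) by the classical theory of the standard Heisenberg group. I begin with the ``if'' direction of $(1)$: given $U\in\UG(n)$ with $P_{\alpha\beta}=\ind{U}{\alpha}{\lambda}{}\,\ind{U}{\beta}{\mu}{}P'_{\lambda\mu}$, the linear map $F(z,t)=(Uz,t)$ satisfies $F^*\theta_{P'}=\theta_P$ by a direct pullback computation on \eqref{e;thetap} (the Levi part matches because $U$ is unitary, the $P$-part because of the congruence relation), so $F$ is a CR equivalence $\mh_P\to\mh_{P'}$. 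This verification, being linear, is routine.

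For the ``only if'' direction of $(1)$, suppose $\mh_P$ and $\mh_{P'}$ are CR equivalent. Integrability of the almost CR structure is a CR invariant, so either $P=P'=0$ (and the stated relation holds trivially) or both models are non-integrable. In the latter case, after composing with Heisenberg translations I may assume the equivalence $F$ fixes the origin, and Lemma~\ref{lemma:CR mapping} gives that each $w^\lambda$ is holomorphic in $z$ with $s=rt+c(z)$ and $r>0$ constant. The crux is to promote ``$w$ holomorphic'' to ``$w$ linear''. Writing $A(z)=(\pd{w^\lambda}{z^\alpha})$, the commutator $[Z_\alpha,Z_{\bar\beta}]=-2i\delta_{\alpha\bar\beta}\partial_t$ pushed forward yields the pointwise identity $A\,A^*=r\,I$, while $[Z_\alpha,Z_\beta]=-2P_{\alpha\beta}\partial_t$ yields $P'_{\lambda\mu}\,\pd{w^\lambda}{z^\alpha}\,\pd{w^\mu}{z^\beta}=r\,P_{\alpha\beta}$. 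Since $A$ is holomorphic, differentiating $A\,A^*=r\,I$ in a holomorphic direction $\partial_\gamma$ gives $(\partial_\gamma A)A^*=0$; as $r>0$ makes $A^*$ invertible, $A$ is also antiholomorphic and hence constant. Thus $w$ is linear, $A=\sqrt r\,U$ with $U\in\UG(n)$ by $A\,A^*=r\,I$, and the second identity becomes exactly $P_{\alpha\beta}=\ind{U}{\alpha}{\lambda}{}\,\ind{U}{\beta}{\mu}{}P'_{\lambda\mu}$. I expect this constancy step to be the main obstacle: it is where the two commutator identities must be combined with holomorphicity to eliminate the $z$-dependence of the Jacobian, whereas everything preceding it is the content of Lemma~\ref{lemma:CR mapping} and everything following it is linear algebra.

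Parts $(2)$ and $(3)$ for $P\neq0$ follow by specializing to $P'=P$. Any $F\in\Aut_0(\mh_P)$ then has $w=\sqrt r\,Uz$ with $U$ unitary and $\ind{U}{\alpha}{\lambda}{}\,\ind{U}{\beta}{\mu}{}P_{\lambda\mu}=P_{\alpha\beta}$ (equivalently $U^tPU=P$), together with $s=rt+c(z)$. To pin down $c$ I feed this linear $w$ into the third relation of \eqref{5-conclusion}, namely $Z_\alpha s=(\pd{w^\lambda}{z^\alpha})\,K_\lambda(w)$: using $U U^*=I$ and $U^tPU=P$ the right-hand side collapses to $r\,K_\alpha$, while the left-hand side equals $\pd{c}{z^\alpha}+r\,K_\alpha$, forcing $\pd{c}{z^\alpha}\equiv0$; since $c$ is real-valued and $c(0)=0$, this gives $c\equiv0$. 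Hence $F(z,t)=(\sqrt r\,Uz,\,rt)=D_\tau\circ G_U$, where $e^\tau=\sqrt r$ and $G_U(z,t)=(Uz,t)$, and $D_\tau$ commutes with $G_U$; this is the splitting $\Aut_0(\mh_P)=\Aut_0(\mh_P,\theta_P)\oplus\{D_\tau\}$ of $(2)$. A direct pullback shows $G_U^*\theta_P=\theta_P$, and conversely the coefficient of $dt$ in $F^*\theta_P$ equals $r$, so $F^*\theta_P=\theta_P$ forces $r=1$; therefore $\Aut_0(\mh_P,\theta_P)=\set{G_U:U\in\UG(n),\,U^tPU=P}$, which is the description in $(3)$.

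Finally, the integrable case $P=0$ of $(3)$ is the standard Heisenberg group, for which Lemma~\ref{lemma:CR mapping} is unavailable; here the $\theta_0$-preserving isotropy at the origin is the classical unitary group, $\Aut_0(\mh_0,\theta_0)\simeq\UG(n)$, and I would record this by the usual direct argument, noting that with no skew form to preserve the condition $U^tPU=P$ becomes vacuous so that every $U\in\UG(n)$ occurs.
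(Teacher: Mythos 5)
Your proposal is correct and takes essentially the paper's own route: the paper states this theorem without proof, explicitly as an application of Lemma~\ref{lemma:CR mapping} (with details deferred to \cite{JooLee2015}), and your argument --- pushing the commutator relations \eqref{eqn:bracket} through the CR map to get the pointwise identities $AA^*=rI$ and $A^tP'A=rP$ for the holomorphic Jacobian $A$, differentiating $AA^*=rI$ holomorphically to force $A$ constant, then specializing $P'=P$ and killing $c(z)$ via the third relation of \eqref{5-conclusion}, with the integrable model treated by classical theory --- is precisely that application. The one point worth recording explicitly is that for $P=0$ part $(2)$ also needs the classical fact that every CR automorphism of $\mh_0$ extends to the sphere fixing the point at infinity, so that $\Aut_0(\mh_0)$ is the two-point stabilizer $\UG(n)\oplus\{D_\tau\}$; your closing paragraph invokes the classical input only for part $(3)$.
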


%%%%%%%%%%%%%%%%%%%%%%%%%%%%%%%%
%%%%%%%%%%%%%%%%%%%%%%%%%%%%%%%%
%%%%%%%%%%%%%%%%%%%%%%%%%%%%%%%%
%%%%%%%%%%%%%%%%%%%%%%%%%%%%%%%%
%%%%%%%%%%%%%%%%%%%%%%%%%%%%%%%%
%%%%%%%%%%%%%%%%%%%%%%%%%%%%%%%%

\section{Contracting CR automorphism}\label{sec:contractingCRa}
In this section, we introduce the CR contraction and its stable manifold and show that there is a canonical contact form of the CR contaction.

\subsection{The CR contractions and the stable manifolds}
Let $(M, \theta)$ be a paeudo-Hermitian manifold. We say that a CR automorphism $\vp$ of $M$ is \emph{(weakly) contracting} at $o\in M$ if 
$$\vp^*\theta |_o = \mu\,\theta|_o$$ for some real $0<\mu<1$. This definition is independent of choice of contact form. Let $(\theta^\al)$ is an admissible coframe for $\theta$ satisfying 
\begin{equation*}
g_{\al\bar\be}(o) = \delta_{\al\bar\be} \;.
\end{equation*}
Since $\varphi$ is preserving the CR structure, we can write
\beg\label{e;transform}
\vp^*\theta^\al = a^\al_\be \theta^\be + c^\al \theta,
\eeg
for some functions $a^\alpha_\beta$ and $c^\alpha$. Then from \eqref{e;str1}, we see that 
\begin{equation*}
\sum_{\gamma}a^\ga_\al a^{\bar\ga}_{\bar\be} = \mu\delta_{\al\bar\be} \quad\text{at $o$.}
\end{equation*}
Therefore, the matrix $A = (a^\al_\be)$ is a normal operator (hence, diagonalizable) and its eigenvalues have modulus $\sqrt\mu$ at $o$. By a unitary change of the frame, we may assume  
$$A = (a^\al_\be) = \begin{pmatrix} \la_1&0&\cdots&0\\
                                                       0&\la_2&\cdots&0\\
							\vdots& &\ddots&\vdots\\
							0&\cdots&0&\lambda_n
\end{pmatrix}$$
at $o$, where $|\la_\al|= \sqrt\mu$ for $\al=1,...,n$. 

\begin{prop}\label{p;normal}
There exists a contact form $\theta$ such that $c^\al$ in \eqref{e;transform} vanishes at $o$.
\end{prop}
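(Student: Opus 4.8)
The plan is to produce the desired contact form by a conformal change $\tilde\theta = e^{2f}\theta$ and to choose the real function $f$ so that the transformed coefficient $\tilde c^\al$ vanishes at $o$. First I would compute how the quantity $c^\al$ in \eqref{e;transform} behaves under such a change. Writing $\vp^*\theta = \rho\,\theta$ for a positive function $\rho$ with $\rho(o)=\mu$ (positivity follows from strong pseudoconvexity together with the fact that $\vp$ commutes with $J$ on $H$), and using the admissible coframe $\tilde\theta^\al = e^f(\theta^\al + v^\al\theta)$ from Proposition~\ref{p;conformal change}, I would expand
\begin{equation*}
\vp^*\tilde\theta^\al = (e^f\circ\vp)\bigl[a^\al_\be\theta^\be + c^\al\theta + (v^\al\circ\vp)\rho\,\theta\bigr]
\end{equation*}
and re-express $\theta^\be$ and $\theta$ through $\tilde\theta^\be$ and $\tilde\theta$ by inverting the coframe change ($\theta = e^{-2f}\tilde\theta$, $\theta^\be = e^{-f}\tilde\theta^\be - v^\be e^{-2f}\tilde\theta$). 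Collecting the $\tilde\theta$-component yields
\begin{equation*}
\tilde c^\al = (e^f\circ\vp)\,e^{-2f}\bigl[\,c^\al - a^\al_\be v^\be + (v^\al\circ\vp)\rho\,\bigr].
\end{equation*}

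Since the prefactor is nowhere zero, $\tilde c^\al(o)=0$ is equivalent to the vanishing of the bracket at $o$. Here I would use that $o$ is a fixed point of $\vp$, so $(v^\al\circ\vp)(o) = v^\al(o)$ and $\rho(o)=\mu$, together with the normalization $a^\al_\be(o) = \la_\al\delta^\al_\be$. The bracket at $o$ then reduces to the decoupled linear system
\begin{equation*}
(\mu - \la_\al)\,v^\al(o) = -\,c^\al(o), \qquad \al = 1,\dots,n
\end{equation*}
(no summation on $\al$), which is an equation for the prescribed values $v^\al(o)$.

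The key point is the nondegeneracy of this system: because $\abs{\la_\al} = \sqrt\mu$ while $0<\mu<1$ forces $\sqrt\mu \neq \mu$, we have $\la_\al \neq \mu$ for every $\al$, so I can solve $v^\al(o) = c^\al(o)/(\la_\al - \mu)$. It remains to realize these target values by an actual function $f$. Feeding $v^\al(o)$ into the defining relation \eqref{e;v} produces the required first-order data $f_\al(o) = iv_\al(o) + p_{\al\be}(o)v^\be(o)$ with $v_\al(o) = \overline{v^\al(o)}$ (using $g_{\al\bar\be}(o)=\delta_{\al\bar\be}$); since $f$ is real its $(0,1)$-derivatives are forced to be the conjugates and the $T$-derivative is free, so a real covector with these $(1,0)$-components exists at $o$. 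Any smooth real $f$ with that differential at $o$ — obtained, for instance, from an affine model in local coordinates multiplied by a bump function — then does the job, and $\tilde\theta = e^{2f}\theta$ is again a contact form because the factor $e^{2f}>0$ preserves both the contact condition and strong pseudoconvexity.

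The only real obstacle is the bookkeeping in the first step, namely correctly tracking the interplay between the pullback by $\vp$ and the coframe change in order to isolate $\tilde c^\al$; once that formula is in hand, the crucial observation that the eigenvalue modulus $\sqrt\mu$ can never equal $\mu$ makes the resulting system solvable, and the construction of $f$ is routine.
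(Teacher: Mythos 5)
Your proof is correct and follows essentially the same route as the paper: perform a pseudoconformal change $\tilde\theta=e^{2f}\theta$, observe that at the fixed point $\tilde c^\al = c^\al-(a^\al_\be-\mu\delta^\al_\be)v^\be$, and solve $(A-\mu I)v=c$ using that the eigenvalues of $A$ have modulus $\sqrt\mu\neq\mu$, then realize $v^\al(o)$ through \eqref{e;v} by a suitable real $f$. The only difference is that you derive the transformation formula for $\tilde c^\al$ explicitly (including the nonconstant factor $\rho$ in $\vp^*\theta=\rho\theta$), which the paper states without computation; the extra detail is sound.
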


\begin{proof}
Let $\theta$ be any pseudo-Hermitian structure on $M$. Assume again that $(\theta^\al)$ be an admissible  coframe for $\theta$ such that $g_{\al\bar\be}(o)=\delta_{\al\bar\be}$ and $A=(a^\al_\be)$ in \eqref{e;transform}  is diagonal at $o$. Let $\tilde\theta = e^{2f}\theta$ be a pseudoconformal change of $\theta$, where $f$ is a real-valued smooth function with $f(o) =0$ and let $(\tilde\theta^\alpha)$ be its admissible coframe defined by $\tilde\theta^\alpha= e^f(\theta^\alpha+v^\alpha\theta)$ as in Proposition~\ref{p;conformal change}. When we let $\varphi^*\tilde\theta^\alpha=\tilde{a}_\beta^\alpha\tilde\theta^\beta+\tilde{c}^\alpha\tilde\theta$, it follows 
that  $\tilde{a}_\beta^\alpha\equiv a_\beta^\alpha$ and 
\begin{equation*}
\tilde{c}^\alpha=c^\alpha-(a^\alpha_\beta -\mu\delta^\alpha_\beta) v^\beta \quad\text{at $o$.}
\end{equation*}
We want to have 
\begin{equation*}
\vp^*\tilde\theta^\al = a^\al_\be \tilde\theta^\be \quad\text{at $o$.}
\end{equation*}
From \eqref{e;transform} and Proposition \ref{p;conformal change}, $v=(v^\al(o))$ must be a solution of
\begin{equation*}
(A - \mu I)v = c,
\end{equation*}
where $c = (c^\al(o))$. Note that the matrix $A-\mu I$ is invertible at $o$ since the eigenvalues of $A$ have modulus $\sqrt\mu >\mu$. Therefore, the above equation is uniquely solved for given $c$, and by taking $f$ whose derivative $(f_\alpha)$ at $o$ gives rise to the solution $v = (v^\al(o))$ of \eqref{e;v}, we see that $\tilde\theta$ satisfies the condition of this proposition.
\end{proof}

From this proposition, we may assume that 
\beg\label{e;f^*}
\vp^*|_o = \begin{pmatrix} \mu&0&\cdots&0\\
                                                       0&\la_1&\cdots&0\\
							\vdots& &\ddots&\vdots\\
							0&\cdots&0&\lambda_n
\end{pmatrix}
\eeg
with respect to $\theta, \theta^\al$.

Note that $TM = H\oplus \mathbb R T$. With regard to this decomposition, we can define a Riemannian metric $ds^2$ on $M$ so that 
\begin{equation*}
ds^2 (T, T) =1,\quad ds^2 (T, X) =0,\quad ds^2(X, Y) = d\theta (X, JY)
\end{equation*}
for any $X, Y\in H$. For $x\in M$, we denote by $|x|$ the geodesic distance from $o$ to $x$.  Then \eqref{e;f^*} implies that 
\beg\label{e;basic}
|\vp(x)| \leq \eta |x|
\eeg
for some $0<\eta<1$ if $x$ is in a sufficiently small neighborhood $U$ of $o$. Let $\W$ be the \emph{stable manifold} of $\vp$. That is, $x\in \W$ if and only if there exists a neighborhood $V$ of $x$ such that $\vp^k \rightarrow o$ uniformly on $V$ as $k\rightarrow \infty$. From \eqref{e;basic}, it turns out $\W$ is a nonempty open subset of $M$. In fact, it can be seen easily that 
$$\W = \bigcup_{k\in\mathbb{Z}} \vp^k (U).$$

\subsection{The canonical contact form of the CR contraction}\label{subsec:canonical contact form} Let $M$ be a strongly pseudoconvex almost CR manifold and $\varphi$ be a CR contraction at $o\in M$. We have $0<\mu<1$ with $\varphi^*\theta|_o=\mu\theta|_o$ for any contact form $\theta$. If a contact form $\theta$ on the stable manifold $\W$ satisfies
\begin{equation}\label{e;canonical}
\varphi^*\theta=\mu\theta,
\end{equation}
we call $\theta$ a \emph{canonical contact form} of $\varphi$.

\begin{prop}\label{p;canonical}
There exists a unique (up to constant multiple) continuous canonical contact form $\theta$ on $\W$.
\end{prop}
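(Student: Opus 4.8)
The plan is to construct $\theta$ as a renormalized limit of pullbacks of an arbitrary smooth contact form and to reduce the entire problem to the convergence of a single scalar infinite product, exploiting that $\vp$ preserves the contact distribution. First I would fix any smooth contact form $\theta_0$ on $\W$. Since $\vp$ is a CR automorphism it preserves $H=\ker\theta_0$, so $\vp^*\theta_0$ and $\theta_0$ have the same kernel and hence $\vp^*\theta_0=\rho\,\theta_0$ for a unique smooth nonvanishing function $\rho$ on $\W$; evaluating at $o$ and using $\vp^*\theta|_o=\mu\theta|_o$ gives $\rho(o)=\mu$. Setting $g:=\mu^{-1}\rho-1$, so that $g(o)=0$, and iterating $\vp^*\theta_0=\rho\theta_0$ yields
$$\theta_k := \mu^{-k}(\vp^k)^*\theta_0 = \Big(\prod_{j=0}^{k-1}\big(1+g\circ\vp^j\big)\Big)\theta_0 =: \Phi_k\,\theta_0 .$$
Thus convergence of the renormalized pullbacks $\theta_k$ is equivalent to convergence of the scalar product $\Phi_\infty=\prod_{j\ge0}(1+g\circ\vp^j)$, and the fixed-point equation $\vp^*\theta=\mu\theta$ corresponds exactly to $\theta=\Phi_\infty\theta_0$.

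Next I would prove uniform convergence on compact subsets of $\W$. Because $g$ is smooth and $g(o)=0$, on a neighborhood of $o$ there is a constant with $|g(y)|\le C|y|$, where $|\cdot|$ denotes geodesic distance to $o$. Combining this with the contraction estimate \eqref{e;basic}, i.e. $|\vp^j(x)|\le\eta^j|x|$ on $U$, gives the geometric bound $|g\circ\vp^j(x)|\le C\eta^j|x|$ on $U$, so $\sum_j|g\circ\vp^j|$ converges uniformly on compact subsets of $U$. After shrinking $U$ to a metric ball we may assume $\vp(U)\subseteq U$, so that $\W=\bigcup_{m\ge0}\vp^{-m}(U)$ is an increasing open exhaustion; for a general compact $K\subset\W$ I would pick $m$ with $\vp^m(K)\subset U$, bound the tail $j\ge m$ as above, and note the finitely many initial terms are bounded on $K$. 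Hence $\Phi_k\to\Phi_\infty$ uniformly on compacts. Since each $\Phi_k$ is the nonvanishing pointwise ratio $\theta_k/\theta_0$ (both $\theta_k$ and $\theta_0$ being nonvanishing with kernel $H$), every factor $1+g\circ\vp^j$ is nonzero, and together with $\sum|g\circ\vp^j|<\infty$ this forces $\Phi_\infty$ to be continuous and nowhere zero; therefore $\theta:=\Phi_\infty\theta_0$ is a continuous nonvanishing contact form.

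To finish existence I would verify the functional equation via the telescoping identity $\Phi_\infty\circ\vp=\Phi_\infty/(1+g)$: combined with $\vp^*\theta_0=\rho\theta_0=\mu(1+g)\theta_0$ this gives $\vp^*\theta=(\Phi_\infty\circ\vp)\,\rho\,\theta_0=\mu\Phi_\infty\theta_0=\mu\theta$. For uniqueness, any continuous canonical contact form $\theta'$ has kernel $H$, so $\theta'=h\theta$ for a continuous nonvanishing $h$; comparing $\vp^*\theta'=\mu\theta'$ with $\vp^*\theta=\mu\theta$ forces $h\circ\vp=h$, and since $\vp^k(x)\to o$ for every $x\in\W$, continuity gives $h\equiv h(o)$, so $\theta'$ is a constant multiple of $\theta$.

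The hard part will be the convergence analysis: establishing the uniform geometric decay $|g\circ\vp^j|\le C\eta^j$ on compact sets and propagating it from the fundamental neighborhood $U$ to all of $\W$ through the exhaustion $\W=\bigcup_m\vp^{-m}(U)$. This single estimate is what simultaneously yields convergence of the product, nonvanishing of the limit, and the continuity (but not smoothness) of the resulting canonical contact form.
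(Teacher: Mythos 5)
Your proposal is correct and follows essentially the same route as the paper: both construct $\theta$ as the locally uniform limit of the renormalized pullbacks $\mu^{-k}(\vp^k)^*\theta_0$, reduce convergence to an infinite scalar product whose terms decay geometrically via the contraction estimate \eqref{e;basic}, and prove uniqueness by showing the $\vp$-invariant ratio of two canonical forms is constant because $\vp^k(x)\to o$. Your treatment of propagating convergence from $U$ to all of $\W$ through the exhaustion $\W=\bigcup_m\vp^{-m}(U)$, and of the telescoping verification of $\vp^*\theta=\mu\theta$, merely makes explicit steps the paper leaves implicit.
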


\begin{proof}
We first show the uniqueness. Suppose that $\theta_1$ and $\theta_2$ are two continuous contact form satisfying \eqref{e;canonical} and that $\theta_1|_o = \theta_2|_o$. Then $\theta_1 - \theta_2 = u\theta_1$ for some continuous function $u$ on $\W$ with $u(o)=0$. Then for any $x\in \W$,
$$u(\vp(x))\mu \theta_1|_x = \vp^*(u\theta_1)|_x = \vp^*(\theta_1-\theta_2)|_x = \mu(\theta_1-\theta_2)|_x= u(x)\mu\theta_1|_x.$$ 
Therefore, we have $u(\vp(x)) = u(x)$ for every $x\in \W$. Conesequently,
$$u(x) = u(\vp^k(x))\rightarrow u(o) =0$$ 
as $k\rightarrow\infty$ for every $x\in \W$. 

\medskip

Let $\tilde\theta$ be any smooth contact form on $M$. Let 
$$\theta_k = \frac1{\mu^k}\, (\vp^k)^* \tilde\theta$$
for $k=1,2,...$. If we denote 
$$\theta_k = u_k \tilde\theta,$$ then it turns out that
$$u_k(x) = \frac1{\mu^k}\,v(\vp^{k-1}(x))\cdots v(x) = \prod_{j=1}^k a_j(x),$$
where $\vp^*\tilde\theta = v\tilde\theta$ and $a_j(x) = v(\vp^{j-1}(x))/\mu$. Since $\vp^k \rightarrow o$ locally uniformly on $\W$, it suffices to show that $u_k$ converges uniformly on $U$ to guarantee the convergence of $\theta_k$ on $\W$. Note that the infintie product $\prod a_j(x)$ converges absolutely and uniformly on $U$ if so is the infinite series $\sum |a_j(x) -1 |$. Since 
$0< v(x)\leq \mu + C |x|$ for some constant $C$ in $U$, we see that 
$$0<a_j(x)\leq 1 + C|\vp^{j-1}(x)| \leq 1 + C|x| \eta^{j-1}$$
on $U$, for some constant $ C>0$ by \eqref{e;basic}.  Since $0<\eta<1$, we conclude that 
$\sum_j |a_j-1| \leq C|x| \sum_j \eta^{j-1}$ converges uniformly on $U$. This yields that $u_k \rightarrow u$ locally uniformly on $\W$ for some positive continuous function $u$ on $\W$. It is obvious that 
$\theta = u\tilde\theta$ satisfies \eqref{e;canonical}.
\end{proof}

The next proposition implies that the canonical contact form $\theta$ in Proposition~\ref{p;canonical} is smooth.

\begin{prop}\label{p;smooth}
A canonical contact form $\theta$ induced by a contracting CR automorphism $\vp$ is indeed $C^\infty$-smooth on $\W$. 
\end{prop}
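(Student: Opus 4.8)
The plan is to upgrade the $C^0$-convergence of the infinite product in Proposition~\ref{p;canonical} to $C^\infty$-convergence. Recall from that proof that $\theta=u\tilde\theta$ with $u=\lim_k u_k$, $u_k=\prod_{j=1}^k a_j$, and $a_j(x)=v(\vp^{j-1}(x))/\mu$, where $\tilde\theta$ is a fixed smooth contact form and $\vp^*\tilde\theta=v\tilde\theta$ with $v$ smooth and positive. Since $\vp^*\tilde\theta|_o=\mu\tilde\theta|_o$ we have $v(o)=\mu$, so writing $g=\log v$ and $\psi_j=\vp^{j-1}$ we get $\log a_j=g\circ\psi_j-\log\mu$ with $\log a_j(o)=0$. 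It suffices to prove that the series $\sum_{j}\log a_j$ converges in $C^m(U)$ for every $m$: then $\log u\in C^\infty(U)$, hence $u=\exp(\log u)$ is smooth and positive on $U$, and smoothness of $\theta=u\tilde\theta$ on all of $\W$ follows from the equivariance $(\vp^k)^*\theta=\mu^k\theta$ (handled at the end).

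The key analytic input is geometric decay of all derivatives of the iterates $\psi_j=\vp^{j-1}$ on a fixed neighborhood of $o$. By \eqref{e;f^*} the spectral radius of $D\vp|_o$ equals $\sqrt\mu<1$ (the horizontal eigenvalues have modulus $\sqrt\mu$ and the vertical one is $\mu$), so after passing to an adapted norm there is a geodesic ball $U=\set{|x|<r}$ and a constant $\rho<1$ with $\|D\vp\|\le\rho$ on $U$; shrinking $r$ if needed, \eqref{e;basic} gives $\vp(U)\subset U$, so $\psi_j(U)\subset U$ for all $j$. I would then prove by induction on $m$ that
\[
\|D^m\psi_j\|_{C^0(U)}\le C_m\,\rho^{\,j-1}\qquad(m\ge1),
\]
using the cocycle identity $\vp^{k}=\vp\circ\vp^{k-1}$. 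Differentiating it $m$ times produces a recursion $\|D^m\vp^{k}\|\le\rho\,\|D^m\vp^{k-1}\|+I_k$, where the inhomogeneous term $I_k$ is a sum (via Fa\`a di Bruno) of products of at least two lower-order derivatives $D^i\vp^{k-1}$ with $1\le i<m$; by the inductive hypothesis each such product decays like $\rho^{2(k-1)}$, so $I_k=O(\rho^{2k})$ and the recursion forces $\|D^m\vp^{k}\|=O(\rho^{k})$.

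Granting this, a second application of Fa\`a di Bruno to $\log a_j=g\circ\psi_j-\log\mu$ bounds $\|D^m\log a_j\|_{C^0(U)}$ by a fixed constant (the $C^m$-norm of $g$ on $U$) times products of the $\|D^i\psi_j\|$; since every term of positive order contains at least one factor $D^i\psi_j$ with $i\ge1$, we obtain $\|D^m\log a_j\|_{C^0(U)}\le C_m'\,\rho^{\,j-1}$ for $m\ge1$, while the Lipschitz bound on $g$ together with $|\psi_j(x)|\le|x|\,\eta^{\,j-1}$ gives $\|\log a_j\|_{C^0(U)}\le C_0'\,\eta^{\,j-1}$. As $\rho,\eta<1$, the series $\sum_j\log a_j$ converges absolutely in $C^m(U)$ for every $m$, whence $u\in C^\infty(U)$. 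Finally, for an arbitrary $x\in\W$ choose $k$ with $\vp^k(x)\in U$ and a neighborhood $V\ni x$ with $\vp^k(V)\subset U$; iterating \eqref{e;canonical} gives $\theta=\mu^{-k}(\vp^k)^*\theta$ on $V$, and since $\theta$ is smooth on $U\supset\vp^k(V)$ and $\vp^k$ is a diffeomorphism, $\theta$ is smooth near $x$. Hence $\theta$ is $C^\infty$ on $\W$.

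The main obstacle is the inductive derivative estimate for the iterates: one must verify that the Fa\`a di Bruno correction terms genuinely decay faster than the homogeneous rate $\rho$, so that the recursion does not degrade the decay rate as $m$ grows. Everything else—the $\log/\exp$ passage, the summability, and the equivariant extension from $U$ to $\W$—is routine once this uniform geometric decay is in hand.
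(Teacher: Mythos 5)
Your proposal is correct, and its skeleton coincides with the paper's: both proofs rest on an inductive chain-rule/Fa\`a di Bruno estimate for the derivatives of the iterates $\vp^j$ on a small invariant neighborhood $U$ of $o$ where $\|D\vp\|\le\rho<1$ (cf.\ \eqref{e;estimate}), followed by summation and an equivariant extension to all of $\W$. The differences are in the two places where you deviate, and both are genuine improvements rather than gaps. First, your key lemma is sharper: you prove $\|D^m\vp^j\|\le C_m\rho^{j-1}$ with a uniform geometric rate, exploiting the fact that every Fa\`a di Bruno term of composition order $p\ge2$ contains at least two lower-order factors $D^i\vp^{j-1}$, $1\le i<m$, hence decays like $\rho^{2j}$ --- strictly faster than the homogeneous rate --- so the recursion $b_j\le\rho b_{j-1}+O(\rho^{2j})$ closes with no loss; the paper settles for the weaker but still summable bound $|D^{(s)}\vp^j|\le P_s(j)\,\eta^{j-s+1}$ with a polynomial factor, absorbing the cross terms into $Q(j)$ in \eqref{e;rec2}. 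Second, you pass to logarithms and prove that $\sum_j\log a_j$ converges absolutely in every $C^m(U)$, which yields $u=\exp(\sum_j\log a_j)\in C^\infty$ directly; the paper instead differentiates the partial products $u_k=\prod_{j\le k}a_j$ via a Leibniz expansion, bounds the partial products from below ($a_j\ge1/2$) and above to control the combinatorial sums, obtains only local uniform boundedness of $\{D^{(s)}u_k\}$, and then extracts $C^{s-1}$-convergent subsequences for each $s$. Your route avoids both the product combinatorics and the compactness/subsequence step (the limit is identified at once), at the small price of needing $v>0$ to take logarithms, which holds after shrinking $U$ exactly as in the paper ($\mu/2\le v\le2\mu$). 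Your final globalization via $\theta=\mu^{-k}(\vp^k)^*\theta$ from \eqref{e;canonical} and the fact that $\vp^k$ is a diffeomorphism is also cleaner than, but equivalent to, the paper's implicit use of local uniform boundedness on $\W$.
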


\begin{proof}
We will prove for every positive integer $s$, $\{D^{(s)} u_k : k\geq0\}$ is locally uniformly bounded on $\W$, where $D^{(s)}$ represents the $s$-th order differential operator. We assume $\tilde\theta$ and $\theta^\al$ were chosen such that $\vp^*$ has the form of \eqref{e;f^*} at $o$. Let $T$ be the characteristic vector field for $\tilde\theta$ and $Z_\al$ be the dual frame for $\{\theta^\al\}$. Then there exists a local coordinates $(z^1,...,z^n, t)\in \CC^n\times\RR$ such that 
$$\left.\frac{\partial}{\partial z^\al}\right|_o = Z_\al|_o,\quad \left.\frac{\partial}{\partial t}\right|_o = T|_o.$$ 
Therefore, the Jacobian of $\vp$ in this coordinate system has the same form with \eqref{e;f^*} at $o$. Shrinking the neighborhood $U$ of $o$ if necessary, we may assume 
\beg\label{e;estimate}
|D\vp| \leq \eta
\eeg
on $U$ for some $0<\eta<1$, where $D$ is the differential operator in this coordinate system.  

\medskip

{\em Claim.} For each positive integer $s$, there exists a polynomial $P_s$ depending on $s$ and $\|\vp\|_{(s)}$ such that 
$$|D^{(s)} \vp^j| \leq P_s(j) \eta^{j-s+1} $$
on $U$ whenever $j\geq s$, where $\|\cdot\|_{(s)}$ denotes the $C^s(U)$-norm. 

\medskip

Assume for a while that this claim is true. Since $a_j = v(\vp^{j-1})/\mu$,  if $j\geq s$, then
\beg\label{e;est}
|D^{(s)} a_j| = \mu^{-1}|D^{(s)}(v(\vp^{j-1}))|\leq \wt P_s(j)\eta^{j-s}
\eeg
on $U$ for some polynomial $\wt P_s$ depending on $\|v\|_{(s)}$ and $P_1,...,P_s$,  from the chain rule and Claim.

Now, we finish the proof of this proposition. Recall that $u_k = \prod_{j=1}^k a_j.$ Therefore, 
\beq
D^{(s)} u_k &=& \sum_j D^{(s)} a_j \prod_{l\neq j}a_l + \sum_{j_1\neq j_2} D^{(s-1)}a_{j_1} Da_{j_2}\prod_{l\neq j_1, l\neq j_2}a_l\\
& & + \cdots + {\sum_{j_1,...,j_s}}'Da_{j_1} \cdots Da_{j_s} \prod_{l\neq j_1,...,l\neq j_s} a_l, 
\eeq
where ${\sum}'$ means the summation over mutually distinct indices. Taking $U$ sufficiently small, we may assume $\mu/2 \leq v(x) \leq 2\mu$ for $x\in U$. Then we have $a_j \geq1/2$ on $U$ for every $j$. Since the infinite product $\prod_j a_j$ converges uniformly on $U$, we see that 
$$   \prod_{l\neq j_1,...,l\neq j_s} a_l =  a_{j_1}\cdots a_{j_s} \prod_l a_l \leq 2^s \prod_l a_l \leq C <\infty$$ on $U$ for some constant $C>0$. Therefore from \eqref{e;est},
$$|D^{(s)} u_k| \leq  C \sum_j \wt P_s(j) \eta^{j-s} \leq C' <\infty$$ on $U$ for some constants $C$ and $C'$ independent of $k$. 
This implies that $\{D^{(s)} u_k : k\geq 1\}$ is locally uniformly bounded on $\W$. Therefore, for each $s>1$, we have a subsequence of $\{u_k\}$ convergent to $u$ in local $C^{s-1}$-sense. Since $s$ is arbitrary, we conclude that $u$ is $C^\infty$-smooth. 

\medskip

Now we prove the claim above. In case $s=1$, it turns out easily that 
$$|D\vp^j| \leq \eta^j$$  on $U$ for every $j\geq 1$, from the chain rule and \eqref{e;estimate}. If $s=2$, then 
\beq D^{(2)}\vp^j &=& D(D\vp (\vp^{j-1})\cdot D\vp^{j-1})\\
 &=& D^{(2)}\vp (\vp^{j-1})\cdot D\vp^{j-1}\cdot D\vp^{j-1} + D\vp(\vp^{j-1})\cdot D^{(2)}\vp^{j-1}.\eeq 
Therefore, 
\beg\label{e;rec}
|D^{(2)}\vp^j| \leq \|\vp\|_{(2)} \eta^{2j-2} + \eta |D^{(2)}\vp^{j-1}|\leq \|\vp\|_{(2)}\eta^{j-1} + \eta |D^{(2)}\vp^{j-1}|.
\eeg
for every $j\geq 2$. If $j=2$, then 
$|D^{(2)}\vp^2| \leq 2\|\vp\|_{(2)}\eta.$
Therefore, if we choose $P_2(j) = \|\vp\|_{(2)} j$, the recursive relation \eqref{e;rec} implies that 
$$|D^{(2)}\vp^j| \leq P_2(j) \eta^{j-1}$$ on $U$ for every $j\geq 2$. 

For more general $s>1$, if we have already chosen $P_1\equiv1, P_2,\ldots, P_{s-1}$, then we can also show that 
\beg\label{e;rec2}
|D^{(s)}\vp^j| \leq Q(j) \eta^{j-s+1} + \eta |D^{(s)}\vp^{j-1}|
\eeg
on $U$ for $j\geq s$, where $Q(j)$ is a polynomial in $j$ determined by $\|\vp\|_{(s)}$ and $P_1,...,P_{s-1}$. Therefore, if $j=s$, then $|D^{(s)}\vp^j|\leq C_s \eta$ for some $C_s$ depending on $\|\vp_s\|$ and if we choose $P_s$ a polynomial with degree greater than that of $Q$ such that 
$$P_s(s) \geq C_s,\quad P_s(j) \geq Q(j) + P_s(j-1),$$ 
then the relation \eqref{e;rec2} yields the conclusion. 
\end{proof}

%%%%%%%%%%%%%%%%%%%%%%%%%%%%%%%%
%%%%%%%%%%%%%%%%%%%%%%%%%%%%%%%%
%%%%%%%%%%%%%%%%%%%%%%%%%%%%%%%%
%%%%%%%%%%%%%%%%%%%%%%%%%%%%%%%%
%%%%%%%%%%%%%%%%%%%%%%%%%%%%%%%%
%%%%%%%%%%%%%%%%%%%%%%%%%%%%%%%%

\section{Proofs of main theorems}\label{sec:proof}

In this section, we will prove Theorem~\ref{t;main} and  characterize the ambient manifold $M$ as the standard sphere under further assumption that the contracting automorphism has another fixed point which is contracting for the inverse map.

\medskip

\noindent\textit{Proof of Theorem~\ref{t;main}.} Assume that $M$ is a strongly pseudoconvex almost CR manifold and $\vp$ is a contracting CR automorphism at $o\in M$. Let $\theta$ be a canonical pseudo-Hermitian structure on $\W$ as obtained in Section~\ref{subsec:canonical contact form} so that 
\begin{equation*}
\vp^*\theta = \mu \theta
\end{equation*}
on the stable manifold $\W$ for some constant $0<\mu<1$. We denote by $\|\cdot\|_\theta$ the norm of tensors measured by $\theta$. For instance, for $T = \ind{T}{\beta}{\alpha}{\gamma}\theta^\be\w\theta^\ga \otimes Z_\al$, 
$$\|T\|_\theta^2 = \ind{T}{\beta}{\alpha}{\gamma}\ind{T}{\bar\rho}{\bar\eta}{\bar\sigma} \, g_{\al\bar\eta}g^{\be\bar\rho} g^{\ga\bar\sigma}.$$
Obviously, $\|\cdot\|_\theta$ does not depend on the choice of coframe $\{\theta^\al\}$.

Let $U$ be a sufficiently small neighborhood of $o$. Since $\mu$ is constant, $\vp^{-k}(U)$ strictly increases as $k$ increases. Therefore, for a point $x\in \W$, there exists $k_0 \geq 1$ such that $x\in \vp^{-k}(U)$ for every $k\geq k_0$. Let $\theta_k = (\vp^{-k})^*\theta = \mu^{-k}\theta$. Since 
\begin{equation*}
\vp^k : (\vp^{-k}(U), \theta) \rightarrow (U, \theta_k)
\end{equation*}
is a pseudohermitain equivalence, we have 
\begin{equation*}
\|T(x)\|_\theta = \|T_k(x_k)\|_{\theta_k},
\end{equation*}
where $ x_k =\vp^k(x) \in U$ and $T_k$ is the torsion tensor for $\theta_k$ on $U$. Since $\theta_k = \mu^{-k}\theta$ a psudoconformal change of $\theta$, Proposition~\ref{p;change} yields that 
\begin{equation*}
\|T_k(x_k)\|_{\theta_k} = \mu^{k/2}\|T(x_k)\|_\theta.
\end{equation*}
Therefore, 
\begin{equation*}
\|T(x)\|_\theta = \mu^{k/2}\|T(x_k)\|_\theta \leq \mu^{k/2}\sup_{y\in U}\|T(y)\|_\theta \rightarrow 0
\end{equation*}
as $k\rightarrow\infty$. This means that $T\equiv 0$ on $\W$. Similarly, we can show that 
\begin{equation*}
\ind{N}{\bar\beta}{\alpha}{\bar\gamma} = \ind{A}{}{\alpha}{\bar\beta}
=\ind{B}{}{\alpha}{\beta}= p_{\al\be;\ga} = \ind{R}{\beta}{\alpha}{\gamma\bar\sigma}
\equiv 0
\end{equation*}
on $\W$. Therefore, we can conclude that $(M,\theta)$ is locally equivalent to $(\mh_P, \theta_P)$ from Proposition \ref{p;equiv}. We may assume $(U, \theta)$ is pseudo-Hermitian equivalent to $(V, \theta_P)$ for some neighborhood $V$ of $0$ in $\mh_P$ and let 
\begin{equation*}
F : (U,\theta)\rightarrow (V, \theta_P)
\end{equation*}
be a pseudo-Hermitian equivalence map such that $F(o) =0$. Let $\Lambda : \mh_P\rightarrow \mh_P$ be the contracting CR automorphism on $\mh_P$ defined by
\begin{equation}\label{e;lambda}
\Lambda(z,t) = (\sqrt\mu\, z, \mu t).
\end{equation}
Note that 
\begin{equation*}
\Lambda^* \theta_P = \mu \theta_P.
\end{equation*}
Therefore, the map 
\begin{equation*}
F_k := \Lambda^{-k}\circ F\circ \vp^k : \vp^{-k}(U) \rightarrow \Lambda^{-k}(V)
\end{equation*}
is a pseudo-Hermitian equivalence between $(\vp^{-k}(U),\theta)$ and $(\Lambda^{-k}(V), \theta_P)$. Since $\{F_k : k\geq 1\}$ is not compactly divergent, ($F^k(o) =0$ for all $k$) we conclude that $\{F_k\}$ has a subsequence converging to a pseudo-Hermitian equivalence $\wt F : (\W, \theta)\rightarrow (\mh_P, \theta_P)$. Altogether, we have proved Theorem~\ref{t;main}. \qed

\medskip

\begin{remark}
Unlike Theorem \ref{t;jl}, it is not very clear how to characterize the ambient manifold $M$ of arbitrary dimension. According to former results in \cite{Schoen1995} and \cite{JooLee2015}, it may be natural to expect either $M=\W\simeq\mh_P$ for some $P$ in case $M$ is noncompact, or $M$ is CR equivalent with the standard sphere and $\W = M\setminus\{\mbox{pt}\}\simeq \mh_0 $ in case $M$ is compact. In \cite{Schoen1995, JooLee2015}, this global characterization could be done from the derivative estimates obtained from a PDE theory on the CR or subconformal Yamabe equations (see Proposition 2.1 and 2.1' in \cite{Schoen1995}). On the other hand, it is still unknown whether there exists a curvature invariant which satisfies a Yamabe-type equation under pseudoconformal changes or not, if $M$ is a strongly pseudoconvex almost CR manifold of arbitrary dimension.
\end{remark}

If $M$ is the standard sphere and if $\vp$ is a contracting CR automorphism with a contracting fixed point $o$, then $\vp^{-1}$ is also a contracting CR automorphism with another contracting fixed point $o'$. The next theorem states that the converse is also true even in almost CR cases.  

\begin{thm}\label{t;thm2} Let $M$ be a strongly pseudoconvex almost CR manifold and let $\vp$ be a contracting CR automorphism with a contracting fixed point $o$. Let $\W$ be the stable manifold of $\vp$ with respect to $o$. Suppose that $\W\neq M$ and that there is a contracting fixed point $o'\in\di\W$ of $\vp^{-1}$. Then $M$ is CR equivalent to the standard sphere $S^{2n+1}\subset \C^{n+1}$.
\end{thm}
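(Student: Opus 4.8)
The plan is to run Theorem~\ref{t;main} for both $\vp$ and $\vp^{-1}$ and to glue the two resulting Heisenberg charts into a sphere. Since $o'$ is a contracting fixed point of the CR contraction $\vp^{-1}$, Theorem~\ref{t;main} applied to $\vp$ and to $\vp^{-1}$ yields CR equivalences
\[
\Phi:\W\to\mh_P,\qquad \Psi:\W'\to\mh_{P'},
\]
where $\W'$ is the stable manifold of $\vp^{-1}$ at $o'$ and $P,P'$ are skew-symmetric. Inspecting the construction of $\widetilde F$ in the proof of Theorem~\ref{t;main}, one may arrange $\Phi(o)=0$, $\Psi(o')=0$ and that the contractions are conjugated to the standard dilations: $\Phi\circ\vp=\Lambda\circ\Phi$ and $\Psi\circ\vp^{-1}=\Lambda'\circ\Psi$, where $\Lambda(z,t)=(\sqrt\mu\,z,\mu t)$ as in \eqref{e;lambda} and $\Lambda'$ is the analogous dilation on $\mh_{P'}$ with ratio $\mu'$. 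Both $\W$ and $\W'$ are invariant under $\vp$ and $\vp^{-1}$, so $\W\cap\W'$ is an open set invariant under $\vp^{\pm1}$; it is nonempty because $o'\in\di\W$ and $\W'$ is an open neighborhood of $o'$. Since $o$ and $o'$ are fixed points we have $o\notin\W'$ and $o'\notin\W$, so $\Omega:=\Phi(\W\cap\W')$ avoids the origin.

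The first and main step is to prove $P=P'=0$. On $\Omega\subset\mh_P\setminus\{0\}$ the transition $G:=\Psi\circ\Phi^{-1}$ is a CR diffeomorphism onto an open subset of $\mh_{P'}\setminus\{0\}$, and the two conjugacies together with the $\vp$-invariance give the intertwining
\[
G\circ\Lambda=(\Lambda')^{-1}\circ G\quad\text{on }\Omega.
\]
Because a CR diffeomorphism carries $H_{1,0}$ to $H_{1,0}$ and preserves Lie brackets, integrability is a CR invariant, so $P=0$ if and only if $P'=0$. Suppose, for contradiction, that both are nonzero. Then Lemma~\ref{lemma:CR mapping} applies to $G$ and forces $G(z,t)=(w(z),rt+c(z))$ with $w$ holomorphic and independent of $t$. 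Reading the intertwining in the first $n$ coordinates gives $w(\sqrt\mu\,z)=(\mu')^{-1/2}w(z)$, and iterating along the $\Lambda$-orbit (which remains in $\Omega$ by invariance) yields $w(\mu^{k/2}z)=(\mu')^{-k/2}w(z)$; letting $k\to\infty$ the left side stays bounded while $(\mu')^{-k/2}\to\infty$, so $w\equiv0$ on $\pi_1(\Omega)$. But then $G$ has image in $\{0\}\times\R$, contradicting that $G$ is a local diffeomorphism. Hence $P=P'=0$, and both $\W$ and $\W'$ are CR equivalent to the standard Heisenberg group $\mh_0\simeq S^{2n+1}\setminus\{\mathrm{pt}\}$.

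Next I would glue. Realize $S^{2n+1}=\mh_0\cup\{\infty\}$ as the standard one-point CR compactification of $\mh_0$, for which the inversion exhibited after Theorem~\ref{thm:CR automorphism group} extends to a CR automorphism swapping $0$ and $\infty$. Since the model is now integrable, the local CR diffeomorphism $G$ is rigid and extends to a global $\hat G\in\Aut_{\mathrm{CR}}(S^{2n+1})$ (CR rigidity of the sphere; cf. Proposition~3.3 of \cite{JooLee2015} for the extension across the puncture). The intertwining shows $\hat G$ interchanges the attracting and repelling fixed points, so $\hat G(0)=\infty$ and $\hat G(\infty)=0$. Define $\mathcal F:\W\cup\W'\to S^{2n+1}$ by $\mathcal F=\Phi$ on $\W$ and $\mathcal F=\hat G^{-1}\circ\Psi$ on $\W'$. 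On the overlap $\Psi=\hat G\circ\Phi$, so the two definitions agree; moreover $\mathcal F(o')=\hat G^{-1}(0)=\infty$, while $\mathcal F(\W'\setminus\{o'\})=\hat G^{-1}(\mh_0\setminus\{0\})\subset\mh_0=\mathcal F(\W)$. Thus $\W\cup\W'=\W\cup\{o'\}$ and $\mathcal F$ is a well-defined CR map, a local diffeomorphism everywhere, injective, with image $\mh_0\cup\{\infty\}=S^{2n+1}$; hence $\mathcal F$ is a CR diffeomorphism onto $S^{2n+1}$.

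Finally, $\W\cup\W'$ is compact, being CR diffeomorphic to $S^{2n+1}$, hence closed in $M$, and it is open as a union of two open sets; since $M$ is connected this forces $M=\W\cup\W'$, so $M$ is CR equivalent to $S^{2n+1}$. The step I expect to be the genuine obstacle is establishing $P=P'=0$: one must combine the non-integrability rigidity of Lemma~\ref{lemma:CR mapping} with the contraction--expansion mismatch encoded in $G\circ\Lambda=(\Lambda')^{-1}\circ G$ to annihilate the holomorphic component $w$. Once integrability is known, producing $\hat G$ and checking that the gluing is a CR diffeomorphism are comparatively formal.
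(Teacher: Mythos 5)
Your overall architecture matches the paper's proof (apply Theorem~\ref{t;main} at both fixed points, study the transition map $G$ on the overlap, use Lemma~\ref{lemma:CR mapping} to rule out non-integrability, then identify $\W\cup\W'$ with the sphere), but your central step is broken. From $w(\mu^{k/2}z)=(\mu')^{-k/2}w(z)$ you assert that ``the left side stays bounded.'' There is no ground for this: $w$ is holomorphic only on $\pi_1(\Omega)$, and the origin is a boundary point \emph{not} in the domain (indeed $o\notin\W'$), so no continuity or boundedness of $w$ at $0$ is available. On the contrary, the identity itself shows $\abs{w(\mu^{k/2}U^k z_0)}=(\mu')^{-k/2}\abs{w(z_0)}\to\infty$ whenever $w(z_0)\neq0$ --- the orbit values are forced to \emph{diverge}, the exact opposite of your claim, so your route to $w\equiv0$ collapses. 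The contradiction must instead be extracted from statement $(3)$ of Lemma~\ref{lemma:CR mapping}, which you quote but never use: $r=\frac1n\sum_{\alpha,\beta}\abs{\pd{w^\beta}{z^\alpha}}^2$ is a \emph{constant}, so the derivative of $w$ is uniformly bounded, which is incompatible with the blow-up $w(z_k)\to\infty$ as $z_k\to 0$; this is precisely how the paper concludes. (Equivalently, differentiating the intertwining relation and taking norms gives $\mu^{1/2}\sqrt{nr}=(\mu')^{-1/2}\sqrt{nr}$, forcing $r=0$ and contradicting that $G$ is a diffeomorphism.) A related minor inaccuracy: you cannot in general ``arrange'' the pure-dilation conjugacy $\Phi\circ\vp=\Lambda\circ\Phi$; what the construction together with Theorem~\ref{thm:CR automorphism group} yields is $\Phi\circ\vp\circ\Phi^{-1}(z,t)=(\mu^{1/2}U(z),\mu t)$ for some unitary $U$ that cannot be normalized away. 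This is harmless for the modulus estimates but should be stated as such.

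Your second half replaces the paper's appeal to Schoen's theorem with an explicit Poincar\'e--Alexander gluing, and as written it has a genuine gap. From $\mathcal{F}(\W'\setminus\{o'\})\subset\mathcal{F}(\W)$ you conclude $\W\cup\W'=\W\cup\{o'\}$ and injectivity of $\mathcal{F}$, but an inclusion of \emph{images} does not give an inclusion of \emph{sets}: a priori a point $x\in\W'\setminus\W$ --- whose forward $\vp$-orbit leaves every compact of $\W'$ without ever entering $\W$; nothing proved so far excludes this --- and some $y\in\W$ could share the same image, and two injective charts agreeing on their overlap need not glue to an injective map unless the overlap is absorbing, e.g.\ unless one first proves $\Phi(\W\cap\W')=\mh_0\setminus\{0\}$. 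Moreover, extending $G$ to a single global $\hat G\in\Aut_{\mathrm{CR}}(S^{2n+1})$ requires the overlap (hence $\Omega$) to be connected; if $\Omega$ has several components the local extensions could disagree. The paper sidesteps all of this: once both models are integrable, $\W\cup\W'$ is a connected, integrable, strongly pseudoconvex CR manifold on which the group generated by $\vp$ acts nonproperly (the orbit of any $p\in\W\cap\W'$ accumulates at the two interior fixed points $o$ and $o'$), so Schoen's theorem \cite{Schoen1995} identifies $\W\cup\W'$ with the standard sphere --- the Heisenberg alternative being excluded by the two fixed points --- and compactness then forces $M=\W\cup\W'$. Either quote Schoen at this stage as the paper does, or supply the missing absorption and injectivity arguments before gluing.
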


\begin{proof} Let $\widetilde F:(\W,\theta)\to (\mh_P,\theta_P)$ be a pseudo-Hermitian equivalence for the canonical contact form $\theta$ of $o$. Applying Theorem~\ref{t;main}, we have also a pseudo-Hermitian equivalence $\wt F' : (\W', \theta')\rightarrow (\mh_{P'}, \theta_{P'})$ where $\W'$ is the stable neighborhood of $o'$ with respect to $\varphi^{-1}$ and $\theta'$ is the canonical contact form on $\W'$ with 
\begin{equation*}
(\varphi^{-1})^*\theta' = \mu'\theta'
\end{equation*}
for some $0<\mu'<1$. Let $\V=\W\cap\W'$ which is an open subset of $M$ admitting $o,o'$ as  boundary points. Take a point $p\in\V$. Then 
\begin{equation}\label{orbit}
\varphi^k(p)\to o \quad\text{and}\quad \varphi^{-k}(p)\to o'
\end{equation}
as $k\to\infty$. Let us consider
\begin{enumerate}
\item $\cU=\wt F(\V)$, $\cU' = \wt F'(\V)$ open subsets of $\mh_P$ and $\mh_{P'}$, respectively
\item $(z_0,t_0) =  \wt F(p) \in\cU$, $(w_0,s_0)=\wt F'(p)\in\cU'$,
\item $\psi = \wt F\circ\varphi\circ \wt F^{-1}$, $\psi' = \wt F'\circ\varphi\circ \wt F'^{-1}$ corresponding CR automorphisms of $\varphi$ in $\mh_P$ and $\mh_{P'}$, respectively.
\end{enumerate}
Since $\cU$ and $\cU'$ are open in $\CC^n\times\RR$, we may assume that $z_0\neq 0$ and $w_0\neq 0$. Theorem~\ref{thm:CR automorphism group} implies that
\begin{equation*}
\psi(z,t) = ( \mu^{1/2}U(z), \mu t)\;, \quad \psi'(w,s) = (\mu'^{-1/2} U'(w), \mu'^{-1} s)
\end{equation*}
for some $U,U'\in\UG(n)$. Since $z_0,w_0\neq 0$, we have 
\begin{equation*}
z_k = \mu^{k/2} U^k(z_0)\to 0 \quad\text{and}\quad w_k = \mu'^{-k/2} U^{k}(w_0)\to \infty
\end{equation*}
in $\CC^n$.

Let us consider the local CR diffeomorphism $G=\wt F'\circ \wt F^{-1}:\cU\subset\mh_P\to \cU'\subset\mh_{P'}$ and denote by
\begin{equation*}
G(z,t) = (w,s)=(w^1,\ldots,w^n,s) \;.
\end{equation*}
Now suppose that $\mh_P$ is non-integrable and let $\cU_1$ be the projection image of $\cU$ to $\CC^n$. Then $w=(w^1,\ldots,w^n)$ is independent of $t$ and is a holomorphic mapping from $\cU_1$ to $\CC^n$ by (1) of Lemma~\ref{lemma:CR mapping}. Moreover
\begin{align*}
G\circ\psi^{k} &= (\wt F'\circ \wt F^{-1})\circ (\wt F\circ \varphi^k \circ \wt F^{-1})
	\\
	&= (\wt F'\circ \varphi^k \circ \wt F'^{-1})\circ (\wt F'\circ \wt F^{-1}) = \psi'^k\circ G
\end{align*}
and
\begin{align*}
G(\psi^{k}(z_0,t_0)) 
	&= \big(w(\psi^{k}(z_0,t_0)), s(\psi^{k}(z_0,t_0))\big) 
	= \big(w(z_k), s(\psi^{k}(z_0,t_0))\big) \;,
	\\
\psi'^k(G(z_0,t_0)
	&= \psi'^k(w_0,s_0) = (\mu'^{-k/2}U'^k(w_0),\mu'^{-k} s_0) \;.
\end{align*}
Therefore we have $w(z_k)=w_k=\mu'^{-k/2} U^{-k}(w_0)$. As a conclusion, the holomorphic mapping $w:\cU_1\to\CC^n$ is not defined and diverges to infinity at the origin $0$ of $\CC^n$, namely,
\begin{equation*}
z_k\to 0   \quad\text{but}\quad w(z_k) = w_k\to\infty
\end{equation*}
in $\CC^n$. This implies that
\begin{equation*}
\sum_{\alpha,\beta=1}^n \abs{\pd{w^\beta}{z^\alpha}(z_k)}^2 \to\infty \quad\text{as $k\to\infty$.}
\end{equation*}
It is a contradiction to Statement (3) of Lemma~\ref{lemma:CR mapping}. This implies $\mh_P$ is integrable. Similarly  $\mh_{P'}$ is also an integrable Heisenberg group and as a consequence, $\W\cup\W'$ is an open submanifold of $M$ whose CR structure is integrable. Notice that the automorphism $\varphi$ acts on $\W\cup\W'$ and generates a noncompact orbit with two fixed points $o$ and $o'$. Therefore $\W\cup\W'$ must be CR equivalent to the standard sphere by Schoen's theorem \cite{Schoen1995}. Then the conclusion follows since $\W\cup\W'$ is an open submanifold of $M$ without boundary and hence $M=\W\cup\W'$.
\end{proof}

%\bibliographystyle{abbrv} %alpha, plain, abbrv, siam
%\bibliography{almostCRC}

\end{document}